\documentclass{article}
\usepackage[utf8]{inputenc}
\usepackage[T2A]{fontenc}
\usepackage[english]{babel}
\inputencoding{utf8}

\usepackage{amsmath, amssymb, amsthm, nicefrac, mathtools, url}

\usepackage[
    pdfpagemode=UseNone,
    bookmarks=true,
    bookmarksopen=true,
    pdfpagelayout=SinglePage,
    pdfstartview={XYZ null null 1},
    pdfhighlight=/P,
    colorlinks=true,
    linkcolor=blue,
    citecolor=blue,
    urlcolor=blue
    ]{hyperref}
\pdfstringdefDisableCommands{\def\eqref#1{(\ref{#1})}}

\usepackage{blkarray}
\usepackage{float}

\advance\textwidth 20mm  \advance\oddsidemargin -10mm
\advance\textheight 20mm \advance\topmargin -15mm

\theoremstyle{plain}
\newtheorem{theorem}{Theorem}
\newtheorem{lemma}[theorem]{Lemma}
\newtheorem{proposition}{Proposition}

\newtheorem*{definition}{Definition}
\newtheorem{remark}{Remark}

\DeclareMathOperator{\diag}{diag}
\DeclareMathOperator{\Mat}{Mat}


\newcommand{\brackets}[1]{\left( #1 \right)}

\newcommand{\LieBrackets}[1]{\left[ #1 \right]}

\DeclareMathOperator{\PII}{P_{2}}
\DeclareMathOperator{\PIV}{P_{4}}

\newcommand{\PPainleve}{Painlev{\'e}}
\newcommand{\PIVn}[1]{\text{P}_4^{ #1 }}


\mathtoolsset{showonlyrefs,showmanualtags}

\title{On matrix Painlev\'e-4 equations. \\ Part 1: Painlev\'e--Kovalevskaya test}

\date{13 June 2021}

\author{I.A. Bobrova\thanks{National Research Univerisity ``Higher School of Economics'', Moscow, Russian Federation.},~ V.V. Sokolov\thanks{L.D.~Landau Institute for Theoretical Physics, Chernogolovka, Russian Federation.} \thanks{Federal University of ABC, Santo Andr\'e, Sao Paulo, Brazil. E-mail: vsokolov@landau.ac.ru}}

\usepackage{tikz}

\begin{document}
\maketitle

\begin{abstract}
Using the Painlev\'e--Kovalevskaya test, we find several new matrix generalizations of the Painlev\'e-4 equation. Some limiting transitions reduce them to known matrix \PPainleve-2 equations.
\medskip

\noindent{\small Keywords:  Matrix ODEs, Painlev\'e--Kovalevskaya test, Painlev\'e equations}
\end{abstract}


\section{Introduction}

\qquad The theory of the Painlev\'e equations is a wonderful part of the analytic theory of ODEs. In addition to their purely mathematical value, the Painlev\'e equations have arisen in a variety of applications including
random matrix theory, topological field theory, plasma physics, quantum gravity, and general relativity.  They also describe self-similar modes
of nonlinear PDEs.

Some models of mathematical physics are described by differential equations with matrix variables. In this case, it is natural to expect the appearance of matrix generalizations of the Painlev\'e equations similar to the scalar ones.

In the paper \cite{Adler_Sokolov_2020_1}, it was shown that the second Painlev\'e equation admits at least three different matrix intgerable generalizations. All of them satisfy the matrix \PPainleve--Kovalevskaya test and possess isomonodromic Lax pairs.  Сertainly, a similar variety should be expected for other Painlev\'e equations. Although the literature on such generalizations is quite rich, the question remains as to how many kinds of them there are and, in particular, how matrix constant coefficients can be included into the equations.

In this paper, we consider matrix generalizations of the Painlev\'e equation $\PIV$ 
\begin{equation}\label{P4scal}
 y''= \frac{y'^2}{2\, y}  +\frac{3}{2} y^3+4z y^2 +2(z^2-\gamma) y+\frac{\delta}{y}.
\end{equation}
Here $'$ means the derivative with respect to  the variable $z$. 

It is known that equation \eqref{P4scal} is equivalent to the system
\begin{equation} \label{syscal}
\left\{
\begin{array}{lcl}
u' &=& -u^2 + 2 u v - 2 z u + c_1 , 
\\[2mm]
v' &=& -v^2 + 2 u v + 2 z v + c_2,
\end{array}
\right.
\end{equation}
where \,\, $y=u$, \,\, $\gamma=1 + \frac{1}{2} c_1 - c_2$, \,\, $\delta = - \frac{1}{2} c_1^2$. 

Using the matrix Painlev\'e test, we look for integrable matrix generalizations of system \eqref{syscal} of the form 
\begin{equation}\label{3rootTail}
\left\{
\begin{array}{lcl}
u' &=&  - u^2 + 2 \,u v  + \alpha (u v - v u) - 2 z u + b_1 u+u b_2+b_3 v+v b_4+b_5
\\ [2mm]
v' &=& -  v^2 + 2 \, v u + \beta (v u - u v) + 2 z v+c_1 v+v c_2 +c_3 u +u c_4+c_5,
\end{array}   
\right.
\end{equation}
where the coefficients $\alpha$, $\beta$ are scalar and others are constant matrices. Here and below, we consider matrices over the field $\mathbb{C}$.

We divide our research into two steps. We first investigate the principle homogeneous part of system \eqref{3rootTail} (see Section \ref{sec:Painlevetest}). It defines basic ingredients for the \PPainleve--Kovalevskaya analysis of system \eqref{3rootTail} such as the structure of leading coefficients of the formal Laurent solutions, resonances, dimensions of the corresponding eigenspaces and so on. At the second stage, admissible matrix coefficients $b_i$ and $c_i$ are found.

The principal homogeneous part of system \eqref{syscal} has the form 
\begin{equation}\label{Ptype}
\left\{
\begin{array}{lcl}
u' &=& -u^2 + 2 u v, \\[2mm]
v' &=& -v^2 + 2 u v.
\end{array}
\right.
\end{equation}
Non-abelian generalizations of system \eqref{Ptype} of the form
\begin{equation}\label{3rootN}
\left\{
\begin{array}{lcl}
u' &=&  - u^2 + 2 \,u v  + \alpha (u v - v u),
\\ [2mm]
v' &=& -  v^2 + 2 \, v u + \beta (v u - u v),
\end{array}   
\right.
\end{equation}
where $\alpha$, $\beta \in {\mathbb C}$, were studied in \cite{SW2}. It turns out that such a system possesses a hierarchy of polynomial infinitesimal symmetries if $(\alpha, \beta)$ is one of the dots 
 in the following figure:

{\color{black}
\begin{figure}[H]
    \centering
    \scalebox{1.1}{\input{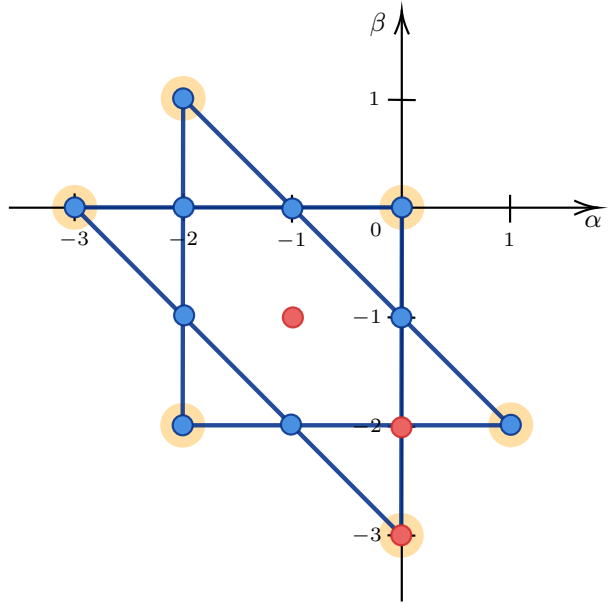}}
    \caption{We choose the points highlighted in red as representatives of  three orbits of the symmetry group action.}
    \label{pic:albetplane}
\end{figure}
}
Let us denote by $\Sigma$ the set of thirteen integer points shown in Figure \ref{pic:albetplane}.

\begin{remark} Another gauge equivalent version of systems \eqref{3rootN} is:
\begin{equation}\label{3rootN1}
    \left\{
    \begin{array}{lcl}
    u' &=&  v^2 + \alpha (u v - v u),
    \\ [2mm]
    v' &=& u^2  + \beta (v u - u v),
    \end{array}   
    \right.
\end{equation}
where $\alpha$, $\beta \in {\mathbb C}$. The existence of infinitesimal symmetries leads to $\beta=\bar \alpha$, where $\alpha$'s are defined by the root system of $G_2$-type:
\begin{figure}[H]
    \centering
    \scalebox{1.1}{\input{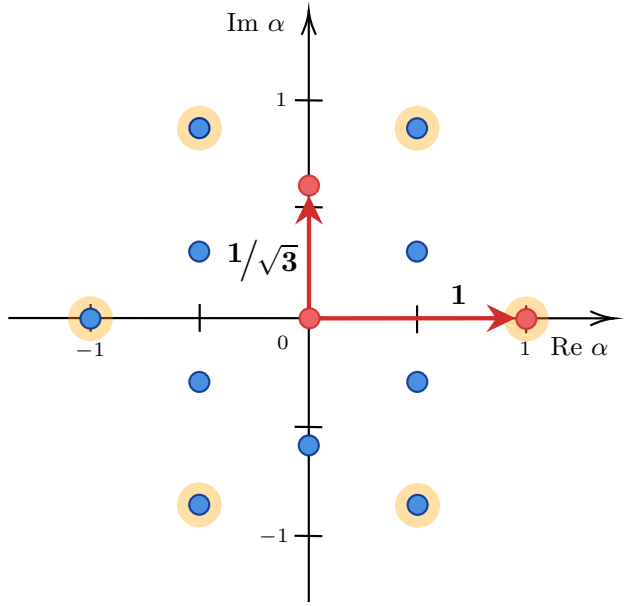}}
    \caption{$\alpha$-plane}
    \label{pic:G2}
\end{figure}
\end{remark}

The involutions
\begin{gather}
    \label{trr1}
    (u,\,v) \mapsto (v,\,u),
    \\[2mm]
    \label{trr2}
    (u,\,v) \mapsto (u^T,\,v^T),  
\end{gather}
and
\begin{equation}
    (u,v) \mapsto (-u,v-u),
\end{equation}
where $T$ means the matrix transposition, preserve the class of such systems \eqref{3rootN} changing the parameters in the following way: 
\begin{align} 
    (\alpha,\beta) 
    &\mapsto (\beta, \alpha), 
    &
    (\alpha,\beta) 
    &\mapsto (- \alpha - 2,   - \beta - 2), 
    &
    (\alpha,\beta) 
    &\mapsto (\alpha,   - \alpha - \beta - 3),
\end{align}
respectively.
The group generated by these three involutions of the $(\alpha, \beta)$-plane is isomorphic to the dihedral group $D_{12}$ of symmetries of a regular 6-gon. The point $(-1,-1)$, the six point marked in Figure \ref{pic:albetplane} with the dots surrounded by an orange rim, and the remaining six points form three orbits of the group action.

In Section \ref{sec:Painlevetest}, we study Painlev\'e properties of matrix systems \eqref{3rootN}. It turns out that under some assumptions (see Theorem \ref{theo4}), they satisfy the matrix \PPainleve--Kovalevskaya test \cite{Balandin_Sokolov_1998} only for points $(\alpha, \beta)$ shown in Figure \ref{pic:albetplane}. 

For each homogeneous system \eqref{3rootN}, where  $(\alpha, \beta)\in \Sigma$   we find in Section \ref{sec:P4tailedsystems} linear terms that can be added to the right hand side so that the resulting system \eqref{3rootTail} still satisfies the matrix \PPainleve--Kovalevskaya test. 

Transformations \eqref{trr1}, \eqref{trr2}
and
\begin{equation}\label{trr3}
    (u,\,v) \mapsto (-u,\,v-u - 2 z) 
\end{equation}
map\footnote{For transformations \eqref{trr2} and \eqref{trr3} one should use the scaling $z\mapsto-i z,\, u\mapsto i u, \, v \mapsto i v$ to correct the signs before $2 z$ in the resulting system \eqref{3rootTail} and after transformation \eqref{trr3} also a shift of $u$ and $v$ is needed to bring the result to the form \eqref{3rootTail}. \label{footnote:scal}} a system \eqref{3rootTail} to another system of the same form but change the coefficients.

One more class of transformations we use in this paper is given by the formula
\begin{equation} \label{tranconj}
    u 
     \mapsto e^{z K} 
    \brackets{u + Q_1} 
    e^{- z K},
    \qquad 
    v \mapsto e^{z K} 
    \brackets{v + Q_2} 
    e^{- z K},
\end{equation}
where $K$ and $Q_i$ are constant matrices.
In general, a transformation \eqref{tranconj} takes a system of the from \eqref{3rootTail} outside of the class of such systems. But in very particular cases transformations \eqref{tranconj} can be applied. 

Let us formulate the main result of Section \ref{sec:P4tailedsystems} 
\begin{theorem} Any system \eqref{3rootTail}  that satisfies the matrix \PPainleve--Kovalevskaya test can be reduced by transformations \eqref{trr1} -- \eqref{tranconj} to one of the following:
\begin{align} \label{eq:case1tail_can}
    \tag*{$\PIVn{0}$}
    &\left\{
    \begin{array}{lcl}
         u' 
         &=& - u^2 + u v + v u - 2 z u 
         + h u 
         + \gamma_1 \, \mathbb{I},
         \\[2mm]
         v' 
         &=& - v^2 + v u + u v + 2 z v
         - v h
         + \gamma_2 \, \mathbb{I},
    \end{array}
    \right.
    \hspace{1.15cm}
\end{align}
\begin{align} 
    \label{eq:case3tail_can}
    \tag*{$\PIVn{1}$}
    &\left\{
    \begin{array}{lcl}
         u' 
         &=& - u^2 + 2 u v - 2 z u 
         + h,
         \\[2mm]
         v' 
         &=& - v^2 + 2 u v + 2 z v
         + h + \gamma \, \mathbb{I},
    \end{array}
    \right.
    \hspace{2.15cm}
\end{align}
\begin{align} 
    \label{eq:case5tail_can}
    \tag*{$\PIVn{2}$}
    &\left\{
    \begin{array}{lcl}
         u' 
         &=& - u^2 + 2 u v - 2 z u 
         + h_2,
         \\[2mm]
         v' 
         &=& - v^2 + 3 u v - v u + 2 z v
         + h_1 u 
         + 2 h_2 + \gamma \, \mathbb{I}.
    \end{array}
    \right.
\end{align}
Here $\gamma$, $\gamma_i \in \mathbb{C}$, $h$ is an arbitrary matrix and two constant matrices $h_1$, $h_2$ are connected by the commutation relation $[h_2, h_1] = - 2 h_1$. 
\end{theorem}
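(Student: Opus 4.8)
The plan is to split the classification into three base cases according to the orbit structure of $\Sigma$, to run the matrix \PPainleve--Kovalevskaya test on the full system \eqref{3rootTail} in each case, and then to normalize the admissible linear terms by the transformations \eqref{trr1}--\eqref{tranconj}. For the first step I would invoke Theorem~\ref{theo4}: a system \eqref{3rootTail} can pass the test only if its principal homogeneous part \eqref{3rootN} has $(\alpha,\beta)\in\Sigma$. The transformations \eqref{trr1}, \eqref{trr2} and \eqref{trr3} (the last two supplemented, where needed, by the scaling and shift of footnote~\ref{footnote:scal}) send a system \eqref{3rootTail} to a system of the same form while acting on the parameters as $(\alpha,\beta)\mapsto(\beta,\alpha)$, $(\alpha,\beta)\mapsto(-\alpha-2,-\beta-2)$ and $(\alpha,\beta)\mapsto(\alpha,-\alpha-\beta-3)$. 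The group they generate has three orbits on $\Sigma$ — the fixed point $(-1,-1)$, the six points with an orange rim in Figure~\ref{pic:albetplane}, and the remaining six — and I would choose the representatives $(-1,-1)$, $(0,-2)$ and $(0,-3)$, whose homogeneous parts are precisely the principal parts of $\PIVn{0}$, $\PIVn{1}$ and $\PIVn{2}$. It then suffices to classify, for each of these three fixed values of $(\alpha,\beta)$, the constant matrices $b_1,\dots,b_5,c_1,\dots,c_5$ for which \eqref{3rootTail} passes the test, and to reduce the answer modulo the transformations that preserve the chosen homogeneous part.

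For a fixed representative I would substitute the formal Laurent series $u=\sum_{k\ge-1}u_k(z-z_0)^k$ and $v=\sum_{k\ge-1}v_k(z-z_0)^k$ into \eqref{3rootTail}, taking the leading coefficients $u_{-1},v_{-1}$, the resonance values, and the dimensions of the corresponding eigenspaces from the analysis of Section~\ref{sec:Painlevetest}; the linear terms do not affect these data, only the recursion for the subleading coefficients. At non-resonant orders the coefficients $u_k,v_k$ are determined uniquely, while at each resonant order the recursion is consistent without logarithmic terms only if certain linear relations among the lower coefficients hold identically in all the free parameters accumulated up to that order (the arbitrary part of $u_{-1},v_{-1}$ and the arbitrary parts introduced at earlier resonances). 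Expanding these relations gives a finite algebraic system for the matrices $b_i,c_i$, linear at the first resonances and with a few commutator-type equations appearing at the higher ones; I would solve it explicitly, carrying out the computation separately for each branch of leading coefficients.

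Finally I would normalize the admissible tails using the transformations that preserve \eqref{3rootTail} with the given homogeneous part: overall constant conjugation $u\mapsto gug^{-1}$, $v\mapsto gvg^{-1}$; the residual discrete symmetries among \eqref{trr1}--\eqref{trr3} that fix $(\alpha,\beta)$; and the shifts $u\mapsto u+Q_1$, $v\mapsto v+Q_2$ together with the conjugations \eqref{tranconj}, applied in exactly those special cases — forced by the relations among the $b_i,c_i$ already obtained — in which they map \eqref{3rootTail} back into the same class. This should eliminate all matrix coefficients except a single matrix $h$ and scalars $\gamma_1,\gamma_2$ for $(\alpha,\beta)=(-1,-1)$, all but one matrix $h$ and a scalar $\gamma$ for $(0,-2)$, and all but two matrices $h_1,h_2$ and a scalar $\gamma$ for $(0,-3)$; in the last case the compatibility constraint that cannot be absorbed by any transformation collapses to $[h_2,h_1]=-2h_1$. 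Assembling the three normal forms yields exactly $\PIVn{0}$, $\PIVn{1}$ and $\PIVn{2}$.

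I expect the main obstacle to be the second step. In the non-commutative setting the resonance compatibility conditions do not simplify by means of scalar identities and must be expanded term by term; each one has to be imposed as an identity in the free matrix parameters, and it is precisely this requirement that pins down the $b_i$ and $c_i$. Keeping track of every branch of leading coefficients and of every resonance — in particular the ones whose eigenspace has dimension greater than one, where several independent compatibility conditions arise — is what guarantees that the resulting list of three families is complete, so the bookkeeping here, rather than any single calculation, is the delicate part.
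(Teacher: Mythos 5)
Your proposal follows the paper's own strategy essentially verbatim: reduction to the three orbit representatives $(-1,-1)$, $(0,-2)$, $(0,-3)$ of $\Sigma$ via the discrete symmetries, resonance compatibility conditions at $k=0,1,2$ imposed identically in the free parameters of the maximal solutions (which, because the residues range over a whole $GL_n$-orbit, yields exactly the $GL_n$-invariance argument of Remark~\ref{invariance} that pins the coefficients down to scalars), and final normalization by conjugations and shifts of type \eqref{tranconj}. The only substantive content left unexecuted is the explicit block-by-block computation of the resonance conditions, which the paper itself only records in the representative cases.
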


The parameter $\alpha$ and the coefficients $b_3$, $b_4$ are equal to zero in systems \ref{eq:case3tail_can} and  \ref{eq:case5tail_can}. Therefore, the unknown variable $v$ can be eliminated and as a result we arrive at two matrix $\text{P}_4$ equations 
of the form 
\begin{equation}\label{P4mat}
\begin{array}{c}
 \displaystyle 
 y''
 = \frac{1}{2}(y' + k_1)
 \, y^{-1}
 \, (y' + k_2)  
 + \frac{3}{2} y^3
 + \kappa [y',y]
 + 4 z y^2
 + y k_{3} y
 + k_{4} y 
 + y k_{5} 
 + 2 z^2 y
 \end{array}
\end{equation}
for $y(z)=u(z)$.
The coefficients $k_i$ are expressed in terms of the corresponding system \eqref{3rootTail}  by the formulas 
\begin{align*}
    &\begin{aligned}
    \kappa
    \,\,
    &= \beta + \frac32,
    &&&
    k_1 
    &= - k_2
    = b_5,
    &&&
    k_3
    &= 2 c_3,
    \end{aligned}
    \\
    &\begin{aligned}
    k_4
    &= - 2 - \brackets{\beta + \frac32} b_5,
    &&&
    k_5
    &= 2 c_5 + \brackets{\beta + \frac12} b_5.
    \end{aligned}
\end{align*}
One of these equations is equivalent to the matrix $\text{P}_4$ equation presented in \cite{Adler_Sokolov_2020_1}.
In the case of scalar coefficients $k_i$ equations are equivalent to those found in  \cite{adler2020}.

System \ref{eq:case1tail_can} can be derived from an obvious matrix generalization of the dressing chain  with $N=3$ investigated in  \cite{veselov1993}.

In order to express the variable $v$ from the first equation of this system, one needs to invert the operator $P_u:\, P_u (v) = u v + v u$ and, therefore, $u''$ is not a rational function in the variables $u$ and $u'$.

\begin{remark} 
The principal homogeneous part of the \text{\rm\ref{eq:case1tail_can}} system can be reduced to the form~\eqref{3rootN1}. After renaming the coefficients, the resulting system is given by
\begin{align} \label{eq:case1tail_can_2}
    &\left\{
    \begin{array}{lcl}
         u' 
         &=& v^2 - 2 z u 
         + h u 
         + \gamma_1 \, \mathbb{I},
         \\[2mm]
         v' 
         &=& u^2 + 2 z v
         - v h
         + \gamma_2 \, \mathbb{I},
    \end{array}
    \right.
\end{align}
where $h$ is an arbitrary matrix and $\gamma_i \in \mathbb{C}$.
\end{remark}

In Section \ref{degeneracies} we find limiting transitions from the matrix \PPainleve-4 systems  to the matrix \PPainleve-2 equations found in \cite{Adler_Sokolov_2020_1}.

\section{Matrix Painlev\'e--Kovalevskaya test for homogeneous \texorpdfstring{\\}{} systems \eqref{3rootN}}
\label{sec:Painlevetest}

The scalar  system \eqref{Ptype} satisfies the \PPainleve--Kovalevskaya test. Namely, it has formal Laurent solutions of the following three types:   
$$
{\bf 1}: \quad  u = -\frac{1}{z-z_0}+O(1), \qquad  v = -\frac{1}{z-z_0}+O(1);
$$
$$
{\bf 2}: \quad  u = \frac{1}{z-z_0}+O(1);  \qquad  v = O(1); \qquad \quad 
{\bf 3}: \quad  u =O(1), \qquad  v =  \frac{1}{z-z_0}+O(1)
$$
that contains, apart from $z_0$, another arbitrary constant. For example, a solution of type {\bf 1} has the form 
$$
u = -\frac{1}{z-z_0}+ \sigma (z-z_0)^2 - \frac{3}{7} \sigma^2  (z-z_0)^5 + \cdots,  
\qquad 
v = -\frac{1}{z-z_0} - \sigma (z-z_0)^2 - \frac{3}{7} \sigma^2  (z-z_0)^5 + \cdots,
$$
where $\sigma$ is arbitrary.

Let us find out when system \eqref{3rootN} possesses a formal solution of the form
\begin{equation}\label{y}
    u
    = \frac{p}{z-z_0}+x_0+x_1(z-z_0)+\cdots\,,
    \qquad 
    v
    =\frac{q}{z-z_0}+y_0+y_1(z-z_0)+\cdots\,,
\end{equation}
where $p$, $q$, $x_j$, $y_j \in \Mat_n (\mathbb{C})$, \,\, $z_0 \in \mathbb C$, containing the maximum possible number $2 n^2$ of arbitrary constants. We will call such a solution {\it maximal}. 

Substituting the series \eqref{y} into  \eqref{3rootN}, we obtain the following recurrence relations for their coefficients:
\begin{gather}
    \label{eq:resconditions}
    - p^2 
    + 2 p q 
    + \alpha \LieBrackets{p, q}
    + p
    = 0,
    \qquad
    - q^2 
    + 2 q p 
    + \beta \LieBrackets{q, p}
    + q
    = 0,
    \\[3mm]
    \begin{aligned}
    \label{eq:coeffconditions}
    -  p x_k - x_k p
    + 2 \brackets{p y_k + x_k q}
    + \alpha \brackets{
    \LieBrackets{p, y_k}
    + \LieBrackets{x_k, q}
    }
    - k \, x_k
    &= f_{\alpha} \brackets{x_{k-1}, y_{k-1}},
    \\[2mm]
    - q y_k - y_k q
    + 2 \brackets{q x_k + y_k p}
    + \beta \brackets{
    \LieBrackets{q, x_k}
    + \LieBrackets{y_k, p}
    }
    - k \, y_k 
    &= f_{\beta} \brackets{y_{k-1}, x_{k-1}},
    \end{aligned}
\end{gather}
where  $f_{\gamma} \brackets{X_{-1}, Y_{-1}}\stackrel{def}{=}0$ and for each  $ k \in \mathbb{N}$ the function $f_{\gamma} \brackets{X_k, Y_k}$ is defined by the formula  
\begin{align}
    \label{eq:rhsFgamma}
    f_{\gamma} \brackets{X_k, Y_k}
    &\stackrel{def}{=} \sum_{l = 0}^{k}
    \brackets{
    \dfrac{1}{2} (X_l X_{k - l}+X_{k - l} X_l)
    - 2 X_{l} Y_{k - l}
    - \gamma \LieBrackets{X_{l}, Y_{k - l}}
    }.
\end{align}

The relations \eqref{eq:coeffconditions} can be written as 
\begin{gather}
    \label{eq:matrixformofconditions}
    \brackets{
    \mathcal{L}
    - k \, \mathbb{I}
    }
    \begin{pmatrix}
    x_k \\[1mm]
    y_k
    \end{pmatrix}
    = 
    \begin{pmatrix}
    f_{\alpha} \brackets{x_{k-1}, y_{k-1}} \\[1mm]
    f_{\beta} \brackets{y_{k-1}, x_{k-1}}
    \end{pmatrix}
    ,
    \quad 
    k \in \mathbb{Z}_{\geq 0}
    ,
\end{gather}
where the operator $\mathcal{L} : \Mat_n \brackets{\mathbb{C}} \oplus \Mat_n \brackets{\mathbb{C}} \to \Mat_n \brackets{\mathbb{C}} \oplus \Mat_n \brackets{\mathbb{C}}$ acts according to the rule
\begin{gather} \label{eq:Loperator}
    \mathcal{L}
    \begin{pmatrix}
    X \\[1mm]
    Y
    \end{pmatrix}
    =
    \begin{pmatrix}
    - p X - X p
    + 2 \brackets{p Y + X q}
    + \alpha \brackets{
    \LieBrackets{p, Y}
    + \LieBrackets{X, q}
    }
    \\[1mm]
    -  q Y - Y q
    + 2 \brackets{q X + Y p}
    + \beta \brackets{
    \LieBrackets{q, X}
    + \LieBrackets{Y, p}
    }
    \end{pmatrix}
    .
\end{gather}

\subsection{System of matrix quadratic equations \eqref{eq:resconditions}}

Let us consider the system \eqref{eq:resconditions} for the residues $p$ and $q$.
Commuting each of the equations of system \eqref{eq:resconditions} with $p$ and $q$, we get 4 linear equations with constant coefficients with respect to the unknowns $p [p,q]$, $[p,q] p$, $q [p,q]$, $[p,q] q$, $[p,q]$. The determinant of the matrix consisting of the coefficients at the first four unknowns is equal to $4 \Delta$, where 
\begin{equation*}
    \Delta 
    = \alpha^2+\beta^2+\alpha \beta + 3 (\alpha+\beta+1).
\end{equation*}

If $ \Delta=0$, then it follows from the system that $[p, q] = 0$.

In the case $\Delta\ne 0$, solving the system, we get
\begin{align}\label{mainR}
    p [p,q] 
    &= \mu_1 [p,q], 
    &
    [p,q] p 
    &= \mu_2 [p,q], 
    &
    q [p,q] 
    &= \mu_3 [p,q],
    &
    [p,q] q 
    &= \mu_4 [p,q],
\end{align}
where 
\begin{equation}\label{muu}
\begin{aligned}
    \mu_1
    &=-\dfrac{\alpha (3+\alpha+2 \beta)}{2 \Delta}, 
    &
    \mu_2
    &=-\dfrac{(2+\alpha) (3+\alpha+2 \beta)}{2 \Delta},
    \\[3mm]
    \mu_3
    &=-\dfrac{\beta (3+\beta+2 \alpha)}{2 \Delta}, 
    & 
    \mu_4
    &=-\dfrac{(2+\beta) (3+\beta+2 \alpha)}{2 \Delta}.
\end{aligned}
\end{equation}
Relations \eqref{mainR} imply $p q [p,q]=q p [p,q]=\mu_1 \mu_3 [p,q]$ and hence  $[p,q]^2=0.$  
\begin{remark} From \eqref{mainR} it follows that the vector space spanned by $p, q, [p,q]$ is a Lie algebra whose square is one-dimensional.
\end{remark}

\begin{proposition}\label{prop2}  Let $(p,q)$ be a solution of system \eqref{eq:resconditions} such that  $[p, q]=0.$ Then the matrices $p$ and $q$ are simultaneously diagonalizable.
\end{proposition}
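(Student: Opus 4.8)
The plan is to exploit the relations \eqref{eq:resconditions} directly under the hypothesis $[p,q]=0$. Since $p$ and $q$ commute, the two Lie-bracket terms drop out, and the system reduces to $-p^2 + 2pq + p = 0$ and $-q^2 + 2qp + q = 0$. Adding and subtracting these, and using $pq=qp$, I would first derive clean polynomial relations: subtracting gives $p^2 - q^2 = p - q$, i.e. $(p-q)(p+q) = p-q$ (all factors commute), while adding gives $p^2 + q^2 - 4pq = p + q$. The first relation says that on each generalized eigenspace the eigenvalue structure of $p-q$ and $p+q$ is severely constrained; the real goal, though, is to rule out nontrivial Jordan blocks, so I would work with the commuting pair $(p,q)$ simultaneously.

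The key step is a simultaneous-block-triangularization argument. Because $p$ and $q$ commute, by Lie's theorem (or just by induction on dimension using a common eigenvector) there is a basis in which both $p$ and $q$ are upper triangular, with diagonal entries $(\lambda_i)$ for $p$ and $(\mu_i)$ for $q$. The plan is then to show this upper-triangular form is forced to be diagonal. Consider the nilpotent parts: write $p = D_p + N_p$, $q = D_q + N_q$ with $D_p,D_q$ the (simultaneously) diagonalizable commuting parts and $N_p,N_q$ the nilpotent parts, all four matrices pairwise commuting (Jordan--Chevalley for the commuting pair). Plugging into $-p^2+2pq+p=0$ and separating diagonalizable from nilpotent parts: the semisimple part gives $-D_p^2 + 2 D_p D_q + D_p = 0$ and the remaining (nilpotent) part gives $-(2 D_p N_p - 2 D_q N_p - 2 D_p N_q) - N_p^2 + 2 N_p N_q + N_p = 0$, i.e. a relation of the form $\bigl(\mathbb{I} - 2 D_p + 2 D_q\bigr) N_p + 2 D_p N_q = N_p^2 - 2 N_p N_q$ (reorganizing terms), and symmetrically from the $q$-equation $\bigl(\mathbb{I} - 2 D_q + 2 D_p\bigr) N_q + 2 D_q N_p = N_q^2 - 2 N_q N_p$. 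The right-hand sides are "higher order" in the nilpotents, so a filtration/grading argument by powers of the maximal ideal of nilpotents should force $N_p = N_q = 0$ provided the coefficient operators $N \mapsto (\mathbb{I} - 2D_p + 2D_q)N + 2 D_p(\cdot)$ acting on the relevant off-diagonal slots are invertible.

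The main obstacle — and the reason the statement is nontrivial rather than immediate — is exactly this invertibility: on the $(i,j)$ off-diagonal slot the linear operator acts by multiplication by a scalar built from $\lambda_i,\lambda_j,\mu_i,\mu_j$, and one must check this scalar is nonzero whenever $i\neq j$. From the semisimple relations $-\lambda^2 + 2\lambda\mu + \lambda = 0$ and $-\mu^2 + 2\mu\lambda + \mu = 0$ one reads off the admissible pairs $(\lambda,\mu)$ for each diagonal index, and the subtraction relation $(\lambda-\mu)(\lambda+\mu)=\lambda-\mu$ gives $\lambda + \mu \in \{0,1\}$ (or $\lambda=\mu$); combined with the original quadratics this pins $(\lambda,\mu)$ down to the finite list $\{(0,0),(1,0),(0,1),(1,1)\}$ up to the exceptional locus. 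One then verifies that for any two distinct such pairs the relevant coefficient scalar is nonzero. The only delicate point is whether two \emph{equal} diagonal pairs $(\lambda_i,\mu_i)=(\lambda_j,\mu_j)$ with $i\neq j$ can support a nilpotent off-diagonal entry; here the higher-order right-hand side must be used — one shows by downward induction on the superdiagonal level that any such entry, if nonzero at the top level, forces a contradiction with a lower-level relation, hence all nilpotent parts vanish and $p,q$ are simultaneously diagonalizable. I expect handling the coincident-eigenvalue case cleanly, without excessive Jordan-form bookkeeping, to be the real work; everything else is bookkeeping on a four-element list of eigenvalue pairs.
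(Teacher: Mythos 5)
Your route is genuinely different from the paper's and can be made to work, but as written it contains one concrete error and leaves its central step as a sketch. The error is in the eigenvalue list: the admissible pairs $(\lambda,\mu)$ solving $-\lambda^2+2\lambda\mu+\lambda=0$, $-\mu^2+2\lambda\mu+\mu=0$ are $(0,0)$, $(1,0)$, $(0,1)$ and $(-1,-1)$, not $(1,1)$ (check: $-1+2+1=2\neq 0$); moreover $(-1,-1)$ is precisely the residue pair of the type $\mathbf{1}$ solution in \eqref{eq:resJNF}, so it cannot be shrugged off as an ``exceptional locus''. Likewise your subtraction relation gives $\lambda=\mu$ or $\lambda+\mu=1$, not $\lambda+\mu\in\{0,1\}$. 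Two remarks on the step you defer as ``the real work''. First, since $N_p$ and $N_q$ commute with $D_p$ and $D_q$, they preserve the common eigenspaces of the semisimple parts, so only slots with $(\lambda_i,\mu_i)=(\lambda_j,\mu_j)$ can carry nonzero entries and your distinct-pair check is vacuous. Second, on a fixed common eigenspace the linear part of your two relations acts on the pair $(N_p,N_q)$ by the $2\times 2$ matrix with rows $\bigl(1-2\lambda+2\mu,\ 2\lambda\bigr)$ and $\bigl(2\mu,\ 1-2\mu+2\lambda\bigr)$, whose determinant $1-4(\lambda-\mu)^2-4\lambda\mu$ equals $1,-3,-3,-3$ at the four admissible pairs, hence is invertible. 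Since the right-hand sides lie in $\mathcal{N}^2$, where $\mathcal{N}$ is the commutative nilpotent algebra generated by $N_p,N_q$, invertibility gives $N_p,N_q\in\mathcal{N}^2$, whence $\mathcal{N}=\mathcal{N}^2=\cdots=0$. With these repairs your argument closes.

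The paper bypasses all of this with a one-line polynomial identity: setting $P_1=-p^2+2pq+p$ and $P_2=-q^2+2pq+q$, one checks $P_1\bigl(q-\tfrac32 p-\tfrac32\bigr)+2pP_2=\tfrac32\bigl(p^3-p\bigr)$, so $p$ (and, symmetrically, $q$) is annihilated by the separable polynomial $x^3-x$ and is therefore diagonalizable; commuting diagonalizable matrices are simultaneously diagonalizable. Your Jordan--Chevalley decomposition ultimately proves the same thing at the cost of a case analysis over eigenvalue pairs and a filtration argument; the moral is that for a commuting pair satisfying polynomial identities one should first try to eliminate one variable and land on a separable annihilating polynomial.
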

\begin{proof} Denote $P_1\stackrel{def}{=}-p^2+ 2 p q+p=0$, $P_2\stackrel{def}{=}-q^2+ 2 p q+q=0$. It is easy to check that 
\begin{equation*}
0 = P_1 \left(q - \frac{3}{2} p- \frac{3}{2}\right) + 2 p P_2 = \frac{3}{2} \left(p^3-p\right).
\end{equation*}
From this equation for $p$ it follows that the Jordan form of $p$ is diagonal. Similarly, the matrix $q$ is diagonalizable. If two matrices are diagonalizable and commute, then they are diagonalizable simultaneously. Indeed, $q$ acts in a diagonalizable way on the eigenspaces of the matrix $p$. Choosing a bases in these eigenspaces consisting of the eigenvectors of the matrix $q$, we give rise to a basis in which both matrices are diagonal.
\end{proof}

Suppose that the matrices $p$ and $q$ are diagonal  (see Proposition \ref{prop2}). Then from \eqref{eq:resconditions} it follows that one may set 
\begin{align}
    \label{eq:resJNF}
    p
    &= 
    \diag \left(
    - \mathbb{I}_{k_1},
    \mathbb{I}_{k_2},
    0_{k_3},
    0_{k_4}
    \right),
    &
    q
    &= 
    \diag
    \left(
    - \mathbb{I}_{k_1},
    0_{k_2},
    \mathbb{I}_{k_3},
    0_{k_4}
    \right),
\end{align}
where $k_1 + k_2 + k_3 + k_4 = n$. 

Consider now the case $[p,q]\ne 0.$ 
Denote by $\Sigma_0$ the collection of all integer points $(\alpha, \beta)$, shown in Figure \ref{pic:albetplane}, with the exception of the point $(-1,-1)$. 

\begin{proposition}\label{prop1} Suppose that there exists a solution $p,q$ of system \eqref{eq:resconditions} such that $[p,q]\ne 0$, then  $(\alpha, \beta)\in \Sigma_0$. 
\end{proposition}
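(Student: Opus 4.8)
The plan is to extract from the commutation relations \eqref{mainR} enough rigid algebraic constraints on the scalars $\mu_1,\mu_2,\mu_3,\mu_4$ to force $(\alpha,\beta)$ into a finite list, then match that list against $\Sigma_0$. First I would fix a solution $p,q$ with $[p,q]\neq 0$ and work inside the nonzero subspace spanned by the columns (or rows) of the rank-one-or-higher matrix $C\stackrel{def}{=}[p,q]$; since we already know $\Delta\neq 0$ in this case (if $\Delta=0$ then $[p,q]=0$ by the discussion preceding Proposition~\ref{prop2}), relations \eqref{mainR} hold. They say that $C$ is a common ``eigen-object'': $p$ acts on the left on $C$ as multiplication by $\mu_1$ and on the right as multiplication by $\mu_2$, and similarly $q$ with $\mu_3,\mu_4$. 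Because $C\neq 0$, there is a nonzero vector $w$ in its column space, and $pw=\mu_1 w$, $qw=\mu_3 w$; likewise a nonzero covector in its row space is a left eigenvector of $p$ with eigenvalue $\mu_2$ and of $q$ with eigenvalue $\mu_4$. Hence $\mu_1,\mu_3$ and $\mu_2,\mu_4$ are genuine eigenvalues of $p,q$.

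Next I would feed these eigenvalue facts back into the two quadratic residue equations \eqref{eq:resconditions}. Applying the first equation $-p^2+2pq+\alpha[p,q]+p=0$ to $w$ and using $pw=\mu_1 w$, $qw=\mu_3 w$ gives the scalar relation $-\mu_1^2+2\mu_1\mu_3+\alpha(\mu_1\mu_3-\mu_3\mu_1)+\mu_1 = -\mu_1^2+2\mu_1\mu_3+\mu_1=0$; wait — the commutator term kills itself on $w$, so this is just $-\mu_1^2+2\mu_1\mu_3+\mu_1=0$, and symmetrically from the second equation $-\mu_3^2+2\mu_3\mu_1+\mu_3=0$. Doing the same on the left with the row-space covector gives $-\mu_2^2+2\mu_2\mu_4+\mu_2=0$ and $-\mu_4^2+2\mu_4\mu_2+\mu_4=0$. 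Together with the explicit formulas \eqref{muu} for $\mu_1,\dots,\mu_4$ in terms of $\alpha,\beta,\Delta$, these four polynomial identities become a polynomial system in $\alpha,\beta$ alone (after clearing $\Delta$). I would solve that system — it is overdetermined, so it cuts out a zero-dimensional variety — and check that its solution set is exactly $\Sigma_0$. I should also separately rule out the forbidden point $(-1,-1)$: there one computes $\Delta=0$, so that point never arises with $[p,q]\neq 0$, consistent with its exclusion from $\Sigma_0$.

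There is one gap in the above that needs care: on $w$ the commutator $[p,q]$ acts as $pq w - qp w = \mu_3 p w - \mu_1 q w = (\mu_3\mu_1-\mu_1\mu_3)w$ only if $w$ is a simultaneous eigenvector, which I have argued it is, so $[p,q]w=0$ — but that is precisely what makes the quadratic relations collapse to the clean scalar form above, and it is also consistent with $C^2=0$ noted in the text. So the argument is: any nonzero column of $C$ is simultaneously annihilated suitably, yielding the scalar quadratics; the point is simply that $\mu_1\mu_3=\mu_3\mu_1$ trivially as numbers, so no information is lost. The genuinely restrictive input is that $\mu_1,\mu_3$ (resp.\ $\mu_2,\mu_4$) satisfy the \emph{coupled} pair $-\mu_1^2+2\mu_1\mu_3+\mu_1=0$, $-\mu_3^2+2\mu_3\mu_1+\mu_3=0$; subtracting, $(\mu_3-\mu_1)(\mu_1+\mu_3-1)=0$, and then a short case analysis (and the analogous one for $\mu_2,\mu_4$), combined with \eqref{muu}, pins down $(\alpha,\beta)$.

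The main obstacle I anticipate is the final elimination step: clearing $\Delta$ from the four rational identities produces polynomials of moderate degree in $\alpha,\beta$, and one must verify that the common zero locus is \emph{precisely} the thirteen points minus $(-1,-1)$, with no spurious extra solutions and no missing ones — in particular one must be careful that the substitution $[p,q]w=0$ did not silently discard a branch where $C$ has higher rank and the left/right eigenvalue data is richer (it does not, because \eqref{mainR} forces $p,q$ to act as scalars on all of $\mathrm{col}(C)$ and $\mathrm{row}(C)$). I would organize this as: (i) reduce to the scalar system via \eqref{mainR}; (ii) derive $(\mu_3-\mu_1)(\mu_1+\mu_3-1)=0$ and its mirror; (iii) substitute \eqref{muu} and solve the resulting few polynomial equations in $(\alpha,\beta)$; (iv) cross off $(-1,-1)$ using $\Delta\neq 0$; (v) identify the survivors with the dots of Figure~\ref{pic:albetplane}.
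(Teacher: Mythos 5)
Your proposal is correct and follows essentially the same route as the paper: the paper derives exactly the same four scalar quadratics $-\mu_1^2+2\mu_1\mu_3+\mu_1=0$, $-\mu_3^2+2\mu_1\mu_3+\mu_3=0$, $-\mu_2^2+2\mu_2\mu_4+\mu_2=0$, $-\mu_4^2+2\mu_2\mu_4+\mu_4=0$ by multiplying equations \eqref{eq:resconditions} by $[p,q]$ on the left and right and invoking \eqref{mainR} together with $[p,q]^2=0$, which is just the matrix-level version of your evaluation on column- and row-space (co)vectors. The remaining step --- substituting \eqref{muu} and solving for $(\alpha,\beta)$ --- is likewise left as a direct computation in the paper, so your plan matches it in full.
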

\begin{proof} Multiplying each of equations \eqref{eq:resconditions} by the commutator $[p, q]$ from the left  and from the right and using relations \eqref{mainR}, we derive that the equations 
\begin{align*}
    -\mu_1^2+2 \mu_1 \mu_3+\mu_1
    &=0, 
    &
    -\mu_3^2+2 \mu_1 \mu_3+\mu_3
    &=0,
\end{align*} 
\begin{align*}
    -\mu_2^2+2 \mu_2 \mu_4+\mu_2
    &=0, 
    &
    -\mu_4^2+2 \mu_2 \mu_4+\mu_4
    &=0
\end{align*}
must be satisfied. Substituting the values \eqref{muu} for $\mu_i$ and solving the resulting system with respect to $\alpha, \beta$, we arrive at the condition $(\alpha, \beta)\in \Sigma_0$.
\end{proof}

\begin{proposition}\label{prop22} Let $n=2$. Then for each point $(\alpha,\beta)\in \Sigma_0$ there exists a unique up to a conjugation solution of system \eqref{eq:resconditions} such that $[p,q]\ne 0.$ The solution can be normalized by the conditions
\footnote{Such a solution is unique up to a conjugation by means of a upper-triangular matrix with units on the diagonal.} 
$$
p= \begin{pmatrix}
    \mu_1 & X \\
    0 & \mu_2
    \end{pmatrix}, \qquad 
q= \begin{pmatrix}
    \mu_3 & Y \\
    0 & \mu_4
    \end{pmatrix}, \qquad [p,\,q] =  \begin{pmatrix}
    0 & 1 \\
    0 & 0
    \end{pmatrix},
$$
where $\mu_i$ are defined by formulas \eqref{muu}. The pairs $(X, Y)$ can be chosen as it follows:
  \begin{align*}
  (1, -2)&: & 
  X
  &= - 1, 
  & 
  Y&= 0;
  &
  (0, 0)&: & 
  X
  &= - 1, 
  & 
  Y&= 0;
  &
  (0, -1)&: & 
  X
  &= 0, 
  & 
  Y&= 1;
  \\[2mm]
  (0, -2)&: & 
  X
  &= - 1, 
  & 
  Y&= 0;
  &
  (0, -3)&: & 
  X
  &= 0, 
  & 
  Y&= -1;
  &
  (-1, 0)&: & 
  X
  &= {- 1}, 
  & 
  Y&= 0;
  \\[2mm]
  (- 1, -2)&: & 
  X
  &= {1}, 
  & 
  Y&= 0;
  &
  (-2, 1)&: & 
  X
  &= 0, 
  & 
  Y&= 1;
  &
  (-2, 0)&: & 
  X
  &= 1, 
  & 
  Y&= 0;
  \\[2mm]
  (- 2, -1)&: & 
  X
  &=  0, 
  & 
  Y&= -1 ;
  &
  (-2, -2)&: & 
  X
  &= 1, 
  & 
  Y&= 0;
  &
  (-3, 0)&: & 
  X
  &= 1, 
  & 
  Y&= 0.
 \end{align*}
 \end{proposition}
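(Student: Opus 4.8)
The plan is to reduce the abstract statement to a finite, explicit calculation in $2\times 2$ matrices. Since $n=2$ and $[p,q]\neq 0$, the preceding analysis (Remark after \eqref{mainR}) tells us that the three matrices $p$, $q$, $[p,q]$ span a Lie algebra whose square is one-dimensional, and that $[p,q]^2=0$. In dimension $2$ a nonzero nilpotent matrix is conjugate to $\left(\begin{smallmatrix}0&1\\0&0\end{smallmatrix}\right)$; so, up to an overall conjugation, I fix $[p,q]=\left(\begin{smallmatrix}0&1\\0&0\end{smallmatrix}\right)$. The residual freedom is conjugation by the stabilizer of this matrix, i.e.\ by upper-triangular matrices with equal diagonal entries (which up to scalar means upper-triangular unipotent matrices), exactly the normalization claimed in the footnote.

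Next I would use relations \eqref{mainR} to pin down the form of $p$ and $q$. Writing $p=\left(\begin{smallmatrix}a&b\\c&d\end{smallmatrix}\right)$, the identities $p[p,q]=\mu_1[p,q]$ and $[p,q]p=\mu_2[p,q]$ with $[p,q]=\left(\begin{smallmatrix}0&1\\0&0\end{smallmatrix}\right)$ force $c=0$, $a=\mu_1$, $d=\mu_2$; likewise $q[p,q]=\mu_3[p,q]$ and $[p,q]q=\mu_4[p,q]$ give $q=\left(\begin{smallmatrix}\mu_3&Y\\0&\mu_4\end{smallmatrix}\right)$. Thus $p$ and $q$ are upper-triangular with the prescribed diagonal entries, and the only unknowns are the off-diagonal entries $X:=b$ and $Y$. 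Note the consistency check $[p,q]=\left(\begin{smallmatrix}0&(\mu_1-\mu_2)Y-(\mu_3-\mu_4)X\\0&0\end{smallmatrix}\right)$, so the normalization $[p,q]=\left(\begin{smallmatrix}0&1\\0&0\end{smallmatrix}\right)$ imposes $(\mu_1-\mu_2)Y-(\mu_3-\mu_4)X=1$; from \eqref{muu} one has $\mu_1-\mu_2=\tfrac{3+\alpha+2\beta}{\Delta}$ and $\mu_3-\mu_4=\tfrac{3+\beta+2\alpha}{\Delta}$.

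The heart of the proof is then to substitute this triangular ansatz into the two quadratic equations \eqref{eq:resconditions}. The diagonal entries are automatically satisfied because $\mu_i$ were \emph{defined} as the solution of the scalar relations (Proposition~\ref{prop1} used precisely $-\mu_1^2+2\mu_1\mu_3+\mu_1=0$ etc.). What remains is the single $(1,2)$-entry of each of the two equations: these are two \emph{affine} equations in $X$ and $Y$ (the quadratic part $-p^2$ contributes $-(\mu_1+\mu_2)X$ to the $(1,2)$ slot, $2pq$ contributes $2(\mu_1 Y+\mu_3 X)$, and $\alpha[p,q]$ contributes $\alpha$, etc.). Together with the normalization relation above one has three affine constraints on the two unknowns $(X,Y)$; the content of the proposition is that for every one of the twelve points of $\Sigma_0$ this overdetermined affine system is consistent and has a unique solution, which I would then exhibit by plugging in the numeric values of $\mu_i$ point by point — reproducing the table. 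Uniqueness up to the stated triangular conjugation follows because any unipotent upper-triangular conjugation $\left(\begin{smallmatrix}1&t\\0&1\end{smallmatrix}\right)$ shifts $(X,Y)$ by $(t(\mu_1-\mu_2), t(\mu_3-\mu_4))$, a one-parameter family, while our solution set is a single point, so in fact the solution is already rigid in these coordinates and the only conjugation freedom is the one already quotiented out.

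The main obstacle is not conceptual but bookkeeping: one must verify for each of the twelve points that (i) $\Delta\neq 0$ so that \eqref{muu} applies, (ii) the $(1,2)$-equations are genuinely affine (the $X^2$, $Y^2$, $XY$ terms cancel — which they do because the off-diagonal of a product of triangular matrices is linear in the off-diagonals once the diagonals are fixed), and (iii) the resulting $3\times 2$ affine system has rank exactly $2$ with the tabulated solution. I would organize this as a short lemma handling (i)–(ii) uniformly in $(\alpha,\beta)$, followed by the explicit twelve-line table as the verification of (iii); I expect the only mild subtlety is to confirm that at points where $\mu_1-\mu_2=0$ or $\mu_3-\mu_4=0$ the normalization is still achievable, which is why the table sometimes records $X=0$ and sometimes $Y=0$.
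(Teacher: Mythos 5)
Your overall strategy -- normalize $[p,q]$ to the nilpotent Jordan block, use \eqref{mainR} to force $p$ and $q$ upper triangular with diagonals $(\mu_1,\mu_2)$ and $(\mu_3,\mu_4)$, and then examine the remaining $(1,2)$-entries -- is the natural one (the paper states this proposition without proof, so there is nothing to compare against beyond the surrounding machinery, which you use correctly). The reduction itself is sound, as is the observation that the diagonal entries of \eqref{eq:resconditions} reduce to the scalar relations $-\mu_1^2+2\mu_1\mu_3+\mu_1=0$, etc., which hold precisely on $\Sigma_0$.

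There is, however, a concrete error in your uniqueness step. You expect the two $(1,2)$-entry equations to be genuine affine constraints which, together with the normalization $(\mu_1-\mu_2)Y-(\mu_3-\mu_4)X=1$, form a rank-$2$ system with a \emph{single} solution $(X,Y)$. In fact both $(1,2)$-entry equations vanish identically once the diagonal relations hold: for the first equation the coefficient of $Y$ is $2\mu_1+\alpha(\mu_1-\mu_2)$, which is $0$ by \eqref{muu} since $\mu_1=-\tfrac{\alpha}{2}(\mu_1-\mu_2)$, and a short computation with $A=3+\alpha+2\beta$, $B=3+2\alpha+\beta$ shows the coefficient of $X$, namely $\tfrac{1}{\Delta}\bigl((1+\alpha)A-(2+\alpha+\beta)B+\Delta\bigr)$, is identically zero as well (similarly for the second equation). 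So the solution set in the $(X,Y)$-plane is the entire line $(\mu_1-\mu_2)Y-(\mu_3-\mu_4)X=1$, not a point, and your assertion that ``the solution is already rigid in these coordinates'' is false -- it also contradicts the footnote of the proposition, which explicitly retains the unipotent conjugation freedom. The correct conclusion still follows from your own earlier computation: the residual conjugation by $\left(\begin{smallmatrix}1&t\\0&1\end{smallmatrix}\right)$ translates $(X,Y)$ by $t\,(\mu_1-\mu_2,\;\mu_3-\mu_4)$, a nonzero vector parallel to that line (nonzero because $(A,B)=(0,0)$ only at $(\alpha,\beta)=(-1,-1)\notin\Sigma_0$), so it acts transitively on the solution line; this gives both existence and uniqueness up to conjugation, with the table supplying one representative per point. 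You should replace the ``rank exactly $2$'' claim by this transitivity argument; as written, carrying out your step (iii) would produce rank $1$ and leave uniqueness unproved.
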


\begin{remark} 
It is easy to verify that if in the solutions described in Proposition \text{\rm \ref{prop22}} we replace the numbers with the corresponding scalar matrices of size $m\times m$, then the resulting $2 m \times 2 m$-matrices define solutions of system \eqref{eq:resconditions} for $n=2 m.$
\end{remark}

Let us describe all solutions of system  \eqref{eq:resconditions}, for which  $[p,q]\ne 0$. Since $[p,q]^2 = 0$, it is possible to reduce the commutator to the block form  
\begin{gather} \label{eq2}
    K \stackrel{def}{=} [p, q]
    =
    \begin{blockarray}{cccc}
      \,\,\, m & m & k \,\,\, &  \\
    \begin{block}{(ccc)c}
      0 & \mathbb{I} & 0 & m  \\
      0 & 0 & 0 & m  \\
      0 & 0 & 0 & k  \\
    \end{block}
    \end{blockarray},
    \quad 
    k + 2 m = n,
\end{gather}
by a conjugation.
It follows from \eqref{mainR} that $p$ and $q$ have the following structure:
\begin{gather*}
    p
    =
    \begin{blockarray}{cccc}
      \,\,\, m & m & k \,\,\, &  \\
    \begin{block}{(ccc)c}
      \mu_1 \, \mathbb{I} & p_{12} & p_{13} & m  \\
      0 & \mu_2 \, \mathbb{I} & 0 & m  \\
      0 & p_{32} & p_{33} & k  \\
    \end{block}
    \end{blockarray},
    \qquad
    q
    =
    \begin{blockarray}{cccc}
      \,\,\, m & m & k \,\,\, &  \\
    \begin{block}{(ccc)c}
      \mu_3 \, \mathbb{I} & q_{12} & q_{13} & m  \\
      0 & \mu_4 \, \mathbb{I} & 0 & m  \\
      0 & q_{32} & q_{33} & k  \\
    \end{block}
    \end{blockarray}.
\end{gather*}
The normalization \eqref{eq2} and equations \eqref{eq:resconditions} provide the fact that the blocks $p_{33}, q_{33}$ commute and satisfy system \eqref{eq:resconditions}. Conjugating $p$ and $q$ with a matrix of the form ${\rm diag}(\mathbb{I}_{2m}, T_k)$, we reduce $p_{33}, q_{33}$ to a diagonal form (see Proposition \ref{prop2} and formula \eqref{eq:resJNF}). As a result, we obtain
\begin{equation} \label{anzat}
    \begin{multlined}
        p
        =
        \begin{blockarray}{ccccccc}
          \,\,\, m & m & k_1 & k_2 & k_3 & k_4 \,\,\, &  \\
        \begin{block}{(cccccc)c}
          \mu_1 \, \mathbb{I} & p_{12} & p_{13} & p_{14} & p_{15} & p_{16} & m  \\
          0 & \mu_2 \, \mathbb{I} & 0 & 0 & 0 & 0 & m  \\
          0 & p_{32} & - \mathbb{I} & 0 & 0 & 0 & k_1  \\
          0 & p_{42} & 0 & \mathbb{I} & 0 & 0 & k_2  \\
          0 & p_{52} & 0 & 0 & 0 & 0 & k_3  \\
          0 & p_{62} & 0 & 0 & 0 & 0 & k_4  \\
        \end{block}
        \end{blockarray},
        \hspace{5cm}
        \\
        q
        =
        \begin{blockarray}{ccccccc}
          \,\,\, m & m & k_1 & k_2 & k_3 & k_4 \,\,\, &  \\
        \begin{block}{(cccccc)c}
          \mu_3 \, \mathbb{I} & q_{12} & q_{13} & q_{14} & q_{15} & q_{16} & m  \\
          0 & \mu_4 \, \mathbb{I} & 0 & 0 & 0 & 0 & m  \\
          0 & q_{32} & - \mathbb{I} & 0 & 0 & 0 & k_1  \\
          0 & q_{42} & 0 & 0 & 0 & 0 & k_2  \\
          0 & q_{52} & 0 & 0 & \mathbb{I} & 0 & k_3  \\
          0 & q_{62} & 0 & 0 & 0 & 0 & k_4  \\
        \end{block}
        \end{blockarray}.
    \end{multlined}
\end{equation}

\begin{theorem}\label{theo2} For each point $(\alpha,\beta)\in \Sigma_0$ and any non-negative integers $m, k_1,k_2,k_3, k_4$ there exists a unique up to a conjugation solution of system \eqref{eq:resconditions}. It can be reduced to the form \eqref{anzat}, where $p_{1,i}=q_{1,i}=p_{i,2}=q_{i,2}=0$ \,for $ i=3,4,5,6$,\, and $p_{1,2}=X\, {\mathbb I}$, \, $q_{1,2}=Y\, {\mathbb I}$ \text{\rm{(}}the numbers $X$ and $Y$ are given in Proposition {\rm \ref{prop22}\text{\rm{)}}}.
 \end{theorem}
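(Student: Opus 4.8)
The plan is to build the solution explicitly in the block form \eqref{anzat} and then show that all the off-diagonal blocks in the first block-row and second block-column must vanish, leaving only the $2\times 2$ piece governed by the numbers $X,Y$ of Proposition \ref{prop22}. First I would substitute the ansatz \eqref{anzat} into the matrix quadratic equations \eqref{eq:resconditions}, and read off the equations block by block. The diagonal blocks $(i,i)$ for $i=3,4,5,6$ reproduce the scalar/commuting cases already analyzed (formula \eqref{eq:resJNF}), so those are consistent by construction. The $(2,2)$ block gives the $2\times 2$ scalar-matrix relations $-\mu_j^2+\cdots=0$ that were used in the proof of Proposition \ref{prop1}; these hold precisely because $(\alpha,\beta)\in\Sigma_0$, and the $(1,2)$ block reduces to the two scalar equations for $X=p_{12}/\mathbb{I}$ and $Y=q_{12}/\mathbb{I}$ that are solved in Proposition \ref{prop22}. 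Uniqueness up to conjugation of that $2\times2$ part is exactly the content of Proposition \ref{prop22} (together with the Remark extending it to scalar $m\times m$ blocks).

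Next I would show the remaining blocks $p_{1i}, q_{1i}, p_{i2}, q_{i2}$ for $i=3,4,5,6$ are forced to be zero. The mechanism is the relations \eqref{mainR}: the commutator $K=[p,q]$ has the specific rank-$m$ nilpotent form \eqref{eq2}, supported only on the $(1,2)$ block, and the identities $pK=\mu_1 K$, $Kp=\mu_2 K$, $qK=\mu_3 K$, $Kq=\mu_4 K$ say that $K$ intertwines $p$ (resp.\ $q$) between its $\mu_2$-data and its $\mu_1$-data. Reading these four matrix identities in the refined $6\times6$ block decomposition, the $(1,j)$ and $(i,2)$ entries of, say, $pK-\mu_1K=0$ produce homogeneous linear relations among the blocks $p_{13},\dots,p_{16}$ (and, via $Kp-\mu_2K=0$, among $p_{32},\dots,p_{62}$), whose only solution is the trivial one once one uses that the diagonal blocks in rows/columns $3$–$6$ are the distinct scalars $-1,1,0,0$ appearing in \eqref{eq:resJNF} and that $\mu_1\ne\mu_2$ for $(\alpha,\beta)\in\Sigma_0$. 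Then one does the same computation directly from the two equations \eqref{eq:resconditions}: the $(1,i)$ and $(i,2)$ components, say of $-p^2+2pq+\alpha[p,q]+p=0$, become linear equations in the unknown blocks with coefficient matrices of the form $(\mu_1-\lambda_i)\mathbb{I}$ or combinations thereof, and invertibility of these coefficients (again because $(\alpha,\beta)\in\Sigma_0$ makes the relevant differences nonzero) kills all of $p_{1i},p_{i2}$ and likewise for $q$. A conjugation by a block-upper-triangular matrix ${\rm diag}$-type adapted to the filtration can be used to remove any residual freedom, which is how the normalization $p_{1i}=q_{1i}=p_{i2}=q_{i2}=0$ is reached.

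For existence, one simply checks that the matrices obtained by keeping only $\mu_j\mathbb{I}$ on the diagonal blocks, $p_{12}=X\mathbb{I}$, $q_{12}=Y\mathbb{I}$ in the $(1,2)$ block, the blocks $-\mathbb{I},\mathbb{I}$ in positions dictated by \eqref{eq:resJNF}, and zeros elsewhere, do satisfy \eqref{eq:resconditions}; this is a direct verification using \eqref{muu} for the scalar identities and Proposition \ref{prop22} for the $X,Y$ identities, plus the observation that the $K$-block and the diagonal blocks do not interact because of the zero pattern. For uniqueness, I would argue that any solution has $[p,q]$ either zero (Proposition \ref{prop2}, giving \eqref{eq:resJNF}, the case $m=0$) or nonzero with $[p,q]^2=0$, hence conjugate to the form \eqref{eq2}; \eqref{mainR} then forces the coarse block structure displayed before \eqref{anzat}, the sub-block $p_{33},q_{33}$ is commuting and solves \eqref{eq:resconditions} so is diagonalized as in \eqref{eq:resJNF}, and the vanishing argument of the previous paragraph finishes the reduction to the stated normal form. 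The main obstacle I anticipate is the bookkeeping in the vanishing step: one has a $6\times6$ array of blocks, four intertwining relations from \eqref{mainR}, and two quadratic matrix equations, and one must organize the resulting linear systems so that the coefficient matrices are manifestly invertible for every one of the thirteen points of $\Sigma_0$ — in particular handling the degenerate cases where some $k_i=0$ or where $\mu_1=\mu_3$ or $\mu_2=\mu_4$ happen to coincide without coinciding with the diagonal entries $-1,1,0$.
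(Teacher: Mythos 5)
Your overall route coincides with the paper's: reduce $K=[p,q]$ to the nilpotent normal form \eqref{eq2}, use \eqref{mainR} to pin down the coarse block structure, observe that $p_{33},q_{33}$ commute and solve \eqref{eq:resconditions} so can be diagonalized as in \eqref{eq:resJNF}, identify the $2\times 2$ core with Proposition \ref{prop22}, and check existence directly. Those parts are sound.

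The gap is in the step where you claim the blocks $p_{1i},q_{1i},p_{i2},q_{i2}$ ($i=3,\dots,6$) are \emph{forced to vanish} because the associated linear systems have invertible coefficients. That is not what happens, and the invertibility you invoke fails at concrete points of $\Sigma_0$. First, $\mu_1\ne\mu_2$ is false at $(1,-2)$, where $3+\alpha+2\beta=0$ gives $\mu_1=\mu_2=0$ (and symmetrically $\mu_3=\mu_4$ at $(-2,1)$). Second, the relevant coefficient differences vanish whenever a diagonal entry of $p_{33}$ or $q_{33}$ coincides with one of the $\mu_j$, which is unavoidable since the $k_3,k_4$ entries are $0$ and, e.g., at $(0,-3)$ one has $\mu_1=\mu_3=\mu_4=0$. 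Indeed, in the paper's worked case $\alpha=0$, $\beta=-3$ the equations \eqref{eq:resconditions} together with \eqref{eq2} leave $p_{12}$, $p_{14}$, $p_{62}$, $q_{13}$, $q_{15}$, $q_{32}$, $q_{52}$ entirely arbitrary (with a few other blocks expressed through them); these are eliminated only afterwards, by conjugating with an explicit unipotent block matrix $g$ that stabilizes the normal form of $K$ and the diagonalized corner. So the conjugation you relegate to ``removing any residual freedom'' is in fact the essential mechanism of the proof, and to complete your argument you must exhibit such a $g$ for each point of $\Sigma_0$ and verify that it preserves the normalizations already achieved, rather than rely on invertibility of the linear systems.
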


\begin{proof} We outline the proof for $\alpha = 0$, $\beta = -3$. In other cases, the proof is similar. 
The formula \eqref{muu} gives $\mu_1 = 0$, $\mu_2 = 1$, $\mu_3 = \mu_4 = 0$. Relations \eqref{eq:resconditions}, \eqref{eq2} are equivalent to the fact that
\begin{equation}
\begin{multlined}
    p
    =
    \begin{blockarray}{ccccccc}
      \,\,\, m & m & k_1 & k_2 & k_3 & k_4 \,\,\, &  \\
    \begin{block}{(cccccc)c}
      0 & p_{12} & q_{13} & p_{14} & 0 & 0 & m  \\
      0 & \mathbb{I} & 0 & 0 & 0 & 0 & m  \\
      0 & 2 q_{32} & - \mathbb{I} & 0 & 0 & 0 & k_1  \\
      0 & 0 & 0 & \mathbb{I} & 0 & 0 & k_2  \\
      0 & - q_{52} & 0 & 0 & 0 & 0 & k_3  \\
      0 & p_{62} & 0 & 0 & 0 & 0 & k_4  \\
    \end{block}
    \end{blockarray},
    \hspace{5cm}
    \\
    q
    =
    \begin{blockarray}{ccccccc}
      \,\,\, m & m & k_1 & k_2 & k_3 & k_4 \,\,\, &  \\
    \begin{block}{(cccccc)c}
      0 & - \mathbb{I} - q_{13} \, q_{32} + q_{15} \, q_{52} & q_{13} & 0 & q_{15} & 0 & m  \\
      0 & 0 & 0 & 0 & 0 & 0 & m  \\
      0 & q_{32} & - \mathbb{I} & 0 & 0 & 0 & k_1  \\
      0 & 0 & 0 & 0 & 0 & 0 & k_2  \\
      0 & q_{52} & 0 & 0 & \mathbb{I} & 0 & k_3  \\
      0 & 0 & 0 & 0 & 0 & 0 & k_4  \\
    \end{block}
    \end{blockarray}.
\end{multlined}
\end{equation}
After the conjugation $p\mapsto g p g^{-1}, \,q\mapsto g p g^{-1},$ where 
$$
    g
    = 
    \begin{blockarray}{ccccccc}
      \,\,\, m & m & k_1 & k_2 & k_3 & k_4 \,\,\, &  \\
    \begin{block}{(cccccc)c}
      \mathbb{I} & - p_{12} - 2 q_{13} \, q_{32} - q_{15} \, q_{52} & q_{13} & -p_{14} & -q_{15} & 0 & m  \\
      0 & \mathbb{I} & 0 & 0 & 0 & 0 & m  \\
      0 & - q_{32} & \mathbb{I} & 0 & 0 & 0 & k_1  \\
      0 & 0 & 0 & \mathbb{I} & 0 & 0 & k_2  \\
      0 & q_{52} & 0 & 0 & \mathbb{I} & 0 & k_3  \\
      0 & - p_{62} & 0 & 0 & 0 & \mathbb{I} & k_4  \\
    \end{block}
    \end{blockarray},
$$
we obtain 
\begin{gather}
    p
    =
    \begin{blockarray}{cccc}
      \,\,\, m & m & k \,\,\, &  \\
    \begin{block}{(ccc)c}
      0 & 0 & 0 & m  \\
      0 & \mathbb{I} & 0 & m  \\
      0 & 0 & p_{33} & k
      \\
    \end{block}
    \end{blockarray},
    \qquad
    q
    =
    \begin{blockarray}{cccc}
      \,\,\, m & m & k \,\,\, &  \\
    \begin{block}{(ccc)c}
      0 & - \mathbb{I} & 0 & m  \\
      0 & 0 & 0 & m  \\
      0 & 0 & q_{33} & k
      \\
    \end{block}
    \end{blockarray},
    \\
    \notag
    p_{33}
    = \diag (\,- \mathbb{I}_{k_1}, \mathbb{I}_{k_2}, 0_{k_3}, 0_{k_4}),
    \qquad 
    q_{33}
    = \diag (\,- \mathbb{I}_{k_1}, 0_{k_2}, \mathbb{I}_{k_3}, 0_{k_4}),
    \quad 
    k_1 + k_2 + k_3 + k_4 = k.
\end{gather}
\end{proof}

\subsubsection{Maximal solutions with commuting residues}\label{sec211}

Let us  consider first the maximal formal solutions of the form \eqref{y} under the condition $[p, q]=0$. In this case, the canonical form of the residues is given by the formula \eqref{eq:resJNF}.
Since the system \eqref{3rootN} admits transformations
$u \mapsto S \, u \, S^{-1}$, $v \mapsto S \, v \, S^{-1}$, where $S$ is an arbitrary non-degenerate matrix, the group $GL_n (\mathbb{C})$ acts on the pair of residues $p$, $q$. The maximum number of arbitrary parameters that can be contained in the residues is equal to the dimension of the orbit $\mathcal{O}$ of this action. Since it can be calculated as the difference between $n^2$ and the dimension of the stabilizer for the pair of matrices $p$ and $q$, we have:\, $\dim~\mathcal{O}~=~n^2 - \brackets{k_1^2 + k_2^2 + k_3^2 + k_4^2}$.

Other arbitrary parameters can appear in the solutions of the system \eqref{eq:matrixformofconditions} when $k$ belongs to the spectrum of the operator  $\mathcal{L}$. For any maximal solution \eqref{y} the total number of parameters must be equal to $2 n^2-1$. One more arbitrary parameter is $z_0.$

\begin{proposition} \label{thm:spectrumL} Suppose that the matrices  $p$ and $q$ have the form  \eqref{eq:resJNF}. Then the spectrum of the operator \eqref{eq:Loperator} belongs to the set 
\begin{gather*}
    \begin{aligned}
    \{
    \lambda_1
    &= - 2, 
    &
    \lambda_2
    &= 2,
    & 
    \lambda_3 
    &= - 1, 
    &
    \lambda_4
    &= 0, 
    &
    \lambda_5
    &= - \alpha, 
    &
    \lambda_6
    &= - \beta,
    &
    \lambda_7
    &= \alpha + 2,
    &
    \lambda_8
    &= \beta + 2,
    \end{aligned}
    \\
    \begin{aligned}
    \lambda_9
    &= 4 + \alpha + 2 \beta,
    &
    \lambda_{10}
    &= 4 + 2 \alpha + \beta,
    &
    \lambda_{11}
    &= 3 + \alpha + \beta,
    &
    \lambda_{12}
    &= - 2 - \alpha - 2 \beta,
    \end{aligned}
    \\
    \begin{aligned}\
    \lambda_{13}
    &= - 2 - 2 \alpha - \beta,
    &
    \lambda_{14}
    &= 1 + \alpha - \beta, 
    &
    \lambda_{15}
    &= 1 - \alpha + \beta, 
    &
    \lambda_{16}
    &= - 1 - \alpha - \beta
    \}.
    \end{aligned}
\end{gather*}
The dimensions of the corresponding eigenspaces are given by the formulas 
\begin{gather*}
    \begin{aligned}
    d_1
    &= d_2
    = k_1^2 + k_2^2 + k_3^2,
    & 
    d_3
    &= 2 \left(k_1 k_2 + k_1 k_3 + k_2 k_3 + k_1 k_4 + k_2 k_4 + k_3 k_4\right), 
    &
    d_4
    &= 2 k_4^2
    \end{aligned}
    \\
    \begin{aligned}
    d_5
    &= d_7
    = k_3 k_4, 
    & 
    d_6
    &= d_8
    = k_2 k_4,
    &
    d_9
    &= d_{12}
    = k_1 k_2,
    &
    d_{10}
    &= d_{13}
    = k_1 k_3,
    \end{aligned}
    \\
    \begin{aligned}
    d_{11}
    &= d_{16}
    = k_1 k_4,
    &
    d_{14}
    &= d_{15}
    = k_2 k_3.
    \end{aligned}
\end{gather*}
\end{proposition}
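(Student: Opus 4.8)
The key observation is that for $p$ and $q$ in the diagonal form \eqref{eq:resJNF} the operator $\mathcal{L}$ respects the block grading of matrices. First I would decompose $\Mat_n(\mathbb{C}) \oplus \Mat_n(\mathbb{C}) = \bigoplus_{i,j=1}^{4} W_{ij}$, where $W_{ij}$ consists of pairs $(X,Y)$ whose only nonzero entries lie in the $(i,j)$ block of size $k_i \times k_j$. On the $i$-th diagonal block $p$ equals the scalar matrix $p_i\,\mathbb{I}_{k_i}$ and $q$ equals $q_i\,\mathbb{I}_{k_i}$, with $(p_i,q_i)\in\{(-1,-1),(1,0),(0,1),(0,0)\}$ for $i=1,2,3,4$. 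Hence left and right multiplications by $p$ and $q$ act on $W_{ij}$ by scalars, so $\mathcal{L}(W_{ij})\subseteq W_{ij}$ and $\mathcal{L}|_{W_{ij}} = M_{ij}\otimes\mathrm{id}$, where, reading off \eqref{eq:Loperator},
\[
M_{ij}
=
\begin{pmatrix}
-(p_i+p_j)+2q_j+\alpha(q_j-q_i) & 2p_i+\alpha(p_i-p_j) \\
2q_i+\beta(q_i-q_j) & -(q_i+q_j)+2p_j+\beta(p_j-p_i)
\end{pmatrix}.
\]
Since $\dim W_{ij}=2k_ik_j$, the block $(i,j)$ contributes $k_ik_j$ to the multiplicity of each eigenvalue of $M_{ij}$.

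Next I would compute the characteristic polynomials of the sixteen $2\times2$ matrices $M_{ij}$ by substituting the four admissible pairs $(p_i,q_i)$. For the diagonal blocks $M_{11},M_{22},M_{33}$ one gets trace $0$ and determinant $-4$, so the eigenvalues are $\lambda_1=-2$ and $\lambda_2=2$; $M_{44}$ is the zero matrix, giving $\lambda_4=0$ on the two-dimensional space $W_{44}$. For each of the twelve off-diagonal blocks a short computation shows that the discriminant $(\operatorname{tr}M_{ij})^2-4\det M_{ij}$ is a perfect square in $\alpha,\beta$ (for instance $(\alpha+2\beta+5)^2$ and $(\alpha+2\beta+1)^2$ for the blocks $(1,2)$ and $(2,1)$, and $M_{ij}$ is triangular for the blocks involving index $4$), so that $M_{ij}$ has the eigenvalue $-1$ (this is $\lambda_3$) together with a second eigenvalue equal to the predicted entry from the list: $(1,2)\mapsto\lambda_9$, $(2,1)\mapsto\lambda_{12}$, $(1,3)\mapsto\lambda_{10}$, $(3,1)\mapsto\lambda_{13}$, $(1,4)\mapsto\lambda_{11}$, $(4,1)\mapsto\lambda_{16}$, $(2,3)\mapsto\lambda_{14}$, $(3,2)\mapsto\lambda_{15}$, $(2,4)\mapsto\lambda_6$, $(4,2)\mapsto\lambda_8$, $(3,4)\mapsto\lambda_5$, $(4,3)\mapsto\lambda_7$. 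In the few degenerate points (such as $(\alpha,\beta)=(-1,-2)$, where some $M_{ij}$ has a repeated eigenvalue) one should additionally check that the relevant $M_{ij}$ is still semisimple, so the statement on eigenspace dimensions survives after merging coinciding eigenvalues.

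Finally I would tally the multiplicities. The eigenvalue $-2$, and likewise $2$, receives $k_i^2$ from each of $M_{11},M_{22},M_{33}$, giving $d_1=d_2=k_1^2+k_2^2+k_3^2$; the eigenvalue $0$ receives $2k_4^2$ from $W_{44}$; the eigenvalue $-1$ receives $k_ik_j$ from every off-diagonal block, i.e. $2\sum_{i<j}k_ik_j$ in total, which is the claimed $d_3$; and each $\lambda_m$ with $5\le m\le16$ occurs in exactly one off-diagonal block, producing the stated $d_m$. As a consistency check $\sum_{m=1}^{16}d_m=2n^2$. The only genuine work is the bookkeeping in the second step—keeping the twelve off-diagonal cases apart and verifying each discriminant factors as a square—but there is no conceptual obstacle.
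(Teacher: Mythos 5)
Your proposal is correct and follows essentially the same route as the paper: the paper likewise splits the eigenvalue problem for $\mathcal{L}$ into the sixteen independent $2\times 2$ subsystems on the block pairs $(x_{ij},y_{ij})$, reads off the two roots of each $2\times 2$ determinant (e.g.\ $\lambda=-1$ and $\lambda=-2-2\alpha-\beta$ for the $(3,1)$ block), and obtains the multiplicities from the block sizes $k_ik_j$. Your extra remark about checking semisimplicity of $M_{ij}$ at the degenerate parameter values is a sensible refinement the paper passes over silently, but it does not change the argument.
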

\begin{proof}
Let us represent $X$ and $Y$ as block $4 \times 4$-matrices:
\begin{gather}
    X
    =
    \begin{blockarray}{ccccc}
      & k_1 & k_2 & k_3 & k_4 \\
    \begin{block}{c(cccc)}
      k_1 & x_{11} & x_{12} & x_{13} & x_{14} \\
      k_2 & x_{21} & x_{22} & x_{23} & x_{24} \\
      k_3 & x_{31} & x_{32} & x_{33} & x_{34} \\
      k_4 & x_{41} & x_{42} & x_{43} & x_{44} \\
    \end{block}
    \end{blockarray}
    \,\, ,
    \qquad
    Y
    = 
    \begin{blockarray}{ccccc}
      & k_1 & k_2 & k_3 & k_4 \\
    \begin{block}{c(cccc)}
      k_1 & y_{11} & y_{12} & y_{13} & y_{14} \\
      k_2 & y_{21} & y_{22} & y_{23} & y_{24} \\
      k_3 & y_{31} & y_{32} & y_{33} & y_{34} \\
      k_4 & y_{41} & y_{42} & y_{43} & y_{44} \\
    \end{block}
    \end{blockarray}
    \, ,
\end{gather}
where the sizes of the blocks are prescribed  by formula \eqref{eq:resJNF}.
Then the system
$$
\brackets{
    \mathcal{L}
    - \lambda \, \mathbb{I}
    }
    \begin{pmatrix}
   X \\[1mm]
   Y
    \end{pmatrix}
    = 
   0
$$
is splitted into 16 independent subsystems with respect to pairs of blocks $x_{ij}, y_{ij}$. For example, for the blocks 
$x_{31}, y_{31}$ 
we have 
\begin{gather*}
    \left\{
    \begin{array}{rcr}
         \left(-1 - 2 \alpha - \lambda\right) x_{31} + \alpha y_{31} & = & 0,  
         \\[2mm]
         \left(2 + 2 \beta\right) x_{31} + \left(-2 - \beta - \lambda\right) y_{31} & = & 0. 
    \end{array}
    \right.
\end{gather*}
Equating the determinant of this system to zero, we find that  $\lambda = -1$ and $\lambda = -2-2 \alpha - \beta$ belong to the spectrum of $\mathcal{L}$. The dimension of the eigenspace corresponding to the second of the values of $\lambda$ is determined by the size of the corresponding block and is equal to $k_1 k_3$. The eigenvalue $\lambda_3 = -1$ occurs in other blocks as well. The sum $d_3$ of the numbers of elements in these blocks is given in Proposition \ref{thm:spectrumL}.
\end{proof}

\begin{definition}
Non-negative integer eigenvalues of the operator $\mathcal{L}$ are called {\it resonances}. 
The blocks that form the corresponding eigenspaces are called {\it resonance blocks}.
\end{definition}

\begin{lemma} \label{lem:max}
 Suppose that a solution \eqref{y} is maximal. Then  $k_1^2 + k_2^2 + k_3^2 = 1$ and all eigenvalues of the operator $\mathcal{L}$  except $\lambda_3 = - 1$ and $\lambda_1 = - 2$  are non-negative integers.
\end{lemma}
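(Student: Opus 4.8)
The plan is to count parameters. A maximal solution of the form \eqref{y} must contain exactly $2n^2$ arbitrary constants, one of which is $z_0$, so the Laurent data (residues plus resonance contributions) must supply $2n^2-1$ of them. As explained in the text preceding the lemma, the residues $p,q$ in the form \eqref{eq:resJNF} contribute the dimension of their $GL_n(\mathbb C)$-orbit, namely $n^2-(k_1^2+k_2^2+k_3^2+k_4^2)$; the remaining parameters must come from the kernels of $\mathcal L - k\,\mathbb I$ as $k$ runs over the non-negative integer resonances. So the identity to be forced is
\begin{equation*}
    n^2 - (k_1^2+k_2^2+k_3^2+k_4^2) + \sum_{k\ \text{resonance}} \dim\ker(\mathcal L - k\,\mathbb I) = 2n^2 - 1 .
\end{equation*}

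First I would rewrite the left-hand side using Proposition \ref{thm:spectrumL}. The total dimension $\sum_i d_i$ of \emph{all} eigenspaces of $\mathcal L$ equals $2n^2$: indeed $d_1+d_2+d_3+d_4 = 2(k_1^2+k_2^2+k_3^2) + 2\sum_{i<j}k_ik_j\cdot(\text{mult.}) + 2k_4^2$, and summing every $d_i$ in the proposition one gets $2(k_1+k_2+k_3+k_4)^2 = 2n^2$ (each off-diagonal product $k_ik_j$ occurs with total multiplicity $4$, matching the $2\cdot 2k_ik_j$ coming from the symmetric pairs of eigenvalues, and each $k_i^2$ with total multiplicity $2$). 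Hence $\mathcal L$ is diagonalizable with no generalized eigenvectors beyond what the $d_i$ record, and the recurrence \eqref{eq:matrixformofconditions} is solvable with a free parameter exactly on each resonance block and uniquely solvable off it — \emph{provided} the non-resonant eigenvalues are never hit, i.e. provided $\lambda_1=-2,\ \lambda_3=-1$ are the only non-positive eigenvalues occurring and all the others are non-negative integers (no positive non-integers can occur because $k$ ranges over $\mathbb Z_{\ge0}$; I would note that $\lambda_5,\dots,\lambda_{16}$ are integers automatically once $\alpha,\beta\in\Sigma_0\subset\mathbb Z^2$, which holds by Proposition \ref{prop1} when $[p,q]\ne0$, and trivially in the commuting case one still needs them integral — this is part of what the lemma asserts).

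Granting that, the parameter count becomes: residue orbit gives $n^2-\sum k_i^2$; the resonance $\lambda_1=-2$ does \emph{not} count (it is negative); $\lambda_3=-1$ does not count; every other eigenvalue, being a non-negative integer resonance, contributes its full multiplicity $d_i$. So the total from resonances is $\big(\sum_i d_i\big) - d_1 - d_2 - d_3 = 2n^2 - 2(k_1^2+k_2^2+k_3^2) - d_3$, where I subtract $d_1=d_2=k_1^2+k_2^2+k_3^2$ (the eigenspace at $\lambda_1=-2$, discarded, and at $\lambda_2=2$, which is non-negative so kept — wait, $\lambda_2=2$ is kept). I would be careful here: among $\lambda_1,\dots,\lambda_4$ only $\lambda_1=-2$ and $\lambda_3=-1$ are excluded, so the resonance total is $2n^2 - d_1 - d_3 = 2n^2 - (k_1^2+k_2^2+k_3^2) - d_3$. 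Adding the residue contribution and $z_0$:
\begin{equation*}
    \big(n^2 - \textstyle\sum_{i} k_i^2\big) + \big(2n^2 - (k_1^2+k_2^2+k_3^2) - d_3\big) + 1 = 2n^2,
\end{equation*}
and substituting $d_3 = 2\sum_{i<j}k_ik_j$ and $\sum_i k_i^2 + 2\sum_{i<j}k_ik_j = n^2$ collapses this to $2n^2 - 2(k_1^2+k_2^2+k_3^2) + 2 = 2n^2$, i.e. $k_1^2+k_2^2+k_3^2 = 1$. That is the claimed constraint, and the computation simultaneously shows that if any eigenvalue other than $\lambda_1,\lambda_3$ were a negative integer or a non-integer, a parameter would be lost and maximality would fail — giving the second assertion.

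The main obstacle is the solvability bookkeeping of the recurrence \eqref{eq:matrixformofconditions} at resonant $k$: a priori, when $k$ is a resonance the inhomogeneous term $\big(f_\alpha,f_\beta\big)$ must lie in the image of $\mathcal L - k\,\mathbb I$ (a compatibility condition), and if it fails the solution is not merely "one parameter short" but nonexistent, forcing logarithms. I would argue that the block-diagonal structure of $\mathcal L$ from the proof of Proposition \ref{thm:spectrumL} decouples the system so that each resonant block contributes an honest free parameter and the compatibility condition, when it must be checked, is automatically met for the homogeneous systems \eqref{3rootN} — or, more carefully, I would phrase the lemma as a \emph{necessary} condition for maximality (which is all the parameter count gives) and defer the genuine resonance/compatibility analysis to the subsequent case-by-case treatment. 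The only other delicate point is confirming $\lambda_2=2$, $\lambda_7=\alpha+2$, $\lambda_8=\beta+2$, etc. are $\ge 0$ and integral for the relevant $(\alpha,\beta)$; since $k_1^2+k_2^2+k_3^2=1$ already forces three of $k_1,k_2,k_3$ to vanish, most $d_i$ collapse to $0$ and only a handful of eigenvalues survive, so this reduces to inspecting finitely many points of $\Sigma_0$.
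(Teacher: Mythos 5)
Your proposal is correct and follows essentially the same route as the paper: a parameter count combining $\dim\mathcal{O}=n^2-\sum_i k_i^2$ with the eigenspace dimensions of Proposition \ref{thm:spectrumL}, giving a total of $2n^2-(k_1^2+k_2^2+k_3^2)$ parameters besides $z_0$ and hence forcing $k_1^2+k_2^2+k_3^2=1$, with the compatibility/sufficiency analysis deferred exactly as the paper defers it to Proposition \ref{thm:albetcond_maxsol}. The only blemish is the intermediate display, which should collapse to $2n^2-(k_1^2+k_2^2+k_3^2)+1$ rather than $2n^2-2(k_1^2+k_2^2+k_3^2)+2$; the conclusion is unaffected.
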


\begin{proof} If all eigenvalues except $\lambda_3 = - 1$ and $\lambda_1 = - 2$ are non-negative integers, then the possible number of arbitrary constants is equal to 
$$
2 n^2 - d_1 - d_3 + {\rm dim}\, {\mathcal O} = 2 n^2 - k_1^2-k_2^2-k_3^3.
$$
Since the number of parameters in a maximal solution must be equal to $2 n^2-1$ we proved the lemma.

\end{proof}
Thus, as in the scalar case, there are three types of possible maximal solutions:
\begin{itemize}
\item {\bf 1}: \qquad  $k_1=1, \quad k_2=k_3=0,\quad k_4=n-1$ ;
\item {\bf 2}: \qquad  $k_2=1, \quad k_1=k_3=0,\quad k_4=n-1$ ;
\item {\bf 3}: \qquad  $k_3=1, \quad k_1=k_2=0,\quad k_4=n-1$ .
\end{itemize}
In each of these cases, ${\rm dim}\,{\mathcal O} = 2 n -2$. Thus, for fixed $p$ and $q$, the coefficients of a maximum series must contain $2 n^2 - 2 n +1$ arbitrary constants.

\begin{proposition} \label{thm:albetcond_maxsol}
\phantom{}
\vspace{-0.2cm}
\begin{itemize}
    \item[\rm{i)}]
    A maximal solution of type {\bf 1} for system \eqref{3rootN} exists iff $\alpha+\beta$ is an integer satisfying the inequalities $-1 \geq \alpha+\beta\geq -3$.
    
    \vspace{-0.2cm}
    \item[\rm{ii)}]
    A maximal solution of type {\bf 2} exists iff $\beta$ is an integer and $0 \geq \beta\geq -2$. 
    
    \vspace{-0.2cm}
    \item[\rm{iii)}]
    A maximal solution of type {\bf 3} exists iff $\alpha$ is an integer and $0 \geq \alpha \geq -2$.
\end{itemize}
\end{proposition}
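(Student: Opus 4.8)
The plan is to read off from Proposition~\ref{thm:spectrumL} exactly which eigenvalues of $\mathcal L$ carry a nonzero eigenspace in each of the three cases, to use Lemma~\ref{lem:max} for the ``only if'' direction, and then to solve the recurrence \eqref{eq:matrixformofconditions} term by term for the ``if'' direction. By the involution \eqref{trr1}, which interchanges $\alpha$ and $\beta$ and swaps types~{\bf 2} and~{\bf 3}, it suffices to treat types~{\bf 1} and~{\bf 2}. Fix the block sizes of \eqref{eq:resJNF} as $(k_1,k_2,k_3,k_4)=(1,0,0,n-1)$ for type~{\bf 1} and $(0,1,0,n-1)$ for type~{\bf 2}, and substitute them into the dimension formulas of Proposition~\ref{thm:spectrumL}. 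One then finds that the only eigenvalues of $\mathcal L$ with nonzero eigenspace are: for type~{\bf 1}, the ``universal'' values $\lambda_1=-2$, $\lambda_2=2$, $\lambda_3=-1$, $\lambda_4=0$ together with $\lambda_{11}=3+\alpha+\beta$ and $\lambda_{16}=-1-\alpha-\beta$, each of dimension $n-1$; for type~{\bf 2}, the same universal values together with $\lambda_6=-\beta$ and $\lambda_8=\beta+2$, each of dimension $n-1$.

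For the ``only if'' direction, Lemma~\ref{lem:max} forces every eigenvalue of $\mathcal L$ other than $\lambda_1$ and $\lambda_3$ to be a non-negative integer. Since $\lambda_2=2$ and $\lambda_4=0$ are automatically so, this amounts for type~{\bf 1} to $\lambda_{11},\lambda_{16}\in\mathbb Z_{\ge0}$, that is, $\alpha+\beta\in\mathbb Z$ and $-3\le\alpha+\beta\le-1$, and for type~{\bf 2} to $\lambda_6,\lambda_8\in\mathbb Z_{\ge0}$, that is, $\beta\in\mathbb Z$ and $-2\le\beta\le0$; type~{\bf 3} follows by $\alpha\leftrightarrow\beta$. (If $n=1$ the relevant eigenspaces are empty, the conditions are vacuous, and one is back in the classical scalar situation.)

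For the ``if'' direction, assume the arithmetic conditions and solve \eqref{eq:matrixformofconditions} recursively with $(p,q)$ in the normal form \eqref{eq:resJNF}. When $k$ is not an eigenvalue of $\mathcal L$, the operator $\mathcal L-k\mathbb I$ is invertible and $x_k,y_k$ are uniquely determined; when $k$ is a resonance one must check that $\bigl(f_\alpha(x_{k-1},y_{k-1}),f_\beta(y_{k-1},x_{k-1})\bigr)$ lies in $\operatorname{Im}(\mathcal L-k\mathbb I)$, and if so $\dim\ker(\mathcal L-k\mathbb I)$ new arbitrary parameters enter. The resonance $k=0$ is automatic because $f_\gamma(X_{-1},Y_{-1})=0$, so $(x_0,y_0)$ ranges over $\ker\mathcal L$; moreover no Jordan blocks intervene, since in the admissible range the two eigenvalues of each relevant $2\times2$ block of Proposition~\ref{thm:spectrumL} are distinct, so $\mathcal L$ is diagonalizable at every resonance. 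A dimension count then closes the argument: $\dim\mathcal O=2n-2$ parameters from the $GL_n(\mathbb C)$-orbit of $(p,q)$, plus the eigenspace dimensions $d_i$ at the non-negative integer resonances, plus $z_0$, add up to $2n^2$.

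The remaining and main point is the compatibility of the right-hand side at the nontrivial resonances $k=2$ and $k=\lambda_{11},\lambda_{16}$ (type~{\bf 1}), resp.\ $k=\lambda_6,\lambda_8$ (type~{\bf 2}). Since $\mathcal L$ preserves the $4\times4$ block grading and only two block rows and columns occur, the recurrence decouples and each compatibility condition reduces to a single linear identity among products of already-computed blocks; I would verify these directly, by computing $x_1,y_1$ and tracking the resonance-block components with the help of the commutation relations \eqref{mainR}, or, more economically, by reducing to $n=2$ (where everything is a finite explicit calculation) and promoting the solution to general $n$ by inflating the scalar block to a scalar $(n-1)$-block, exactly as in the Remark after Proposition~\ref{prop22}. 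The resonance $k=2$, which already occurs in the scalar system \eqref{Ptype}, is compatible for the same reason; I expect the conditions at $k=\lambda_{11},\lambda_{16}$ and their type~{\bf 2} counterparts, which have no scalar analogue, to be the delicate step, and the boundary values $\alpha+\beta\in\{-1,-3\}$, resp.\ $\beta\in\{0,-2\}$, where these resonances collide with $\lambda_2$ or $\lambda_4$, to require separate bookkeeping of the merged eigenspaces.
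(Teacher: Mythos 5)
Your proposal follows essentially the same route as the paper: read off the spectrum and eigenspace dimensions from Proposition~\ref{thm:spectrumL} for the three choices of $(k_1,k_2,k_3,k_4)$, apply Lemma~\ref{lem:max} to get the necessity of $\lambda_{11},\lambda_{16}\in\mathbb Z_{\ge0}$ (resp.\ $\lambda_6,\lambda_8$ and $\lambda_5,\lambda_7$), and establish sufficiency by checking compatibility of \eqref{eq:matrixformofconditions} at the resonances together with the dimension count $(2n-2)+(2n^2-2n+1)+1=2n^2$. The paper likewise leaves the resonance compatibility to ``a direct calculation based on formula \eqref{eq:rhsFgamma}'', so your level of detail matches, and your added observations (using \eqref{trr1} to reduce type {\bf 3} to type {\bf 2}, absence of Jordan blocks in the admissible range, and the merged eigenspaces at the boundary values) are correct refinements.
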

\begin{proof}
In Case {\bf1} (series of type {\bf 1}), the spectrum of the operator $\mathcal L$ is the set consisting of the following 6 numbers: $\lambda_1,\dots, \lambda_4,\, \lambda_{11} = 3 + \alpha + \beta$, and $\lambda_{16}= - 1 - \alpha - \beta$ (see Proposition \ref{thm:spectrumL}). The conditions of Lemma \ref{lem:max} hold if $\lambda_{11},\lambda_{16} \in \mathbb{Z}_{\geq 0}$. This means that $\alpha+\beta$ is an integer satisfying the inequalities $-1 \geq \alpha+\beta\geq -3$. Similarly, we show the necessity of the conditions of the proposition for solutions of the types {\bf 2} and {\bf 3}.

For series of type {\bf 1}, there exist three possibilities: $\alpha+\beta=-1,-2, -3$ with different sets of resonance blocks. To prove the sufficiency in Case {\bf 1}, we have to verify that in each case system \eqref{eq:matrixformofconditions} is compatible for all eigenvalues and the sum of dimensions of the resonant blocks equals $2 n^2 - n + 1$. This can be done by a direct calculation based on formula \eqref{eq:rhsFgamma}. The series of types {\bf 2} and {\bf 3} are considered in a similar way.
\end{proof}

\begin{remark}
Points from the set $\Sigma$ \text{\rm(}see Figure \text{\rm \ref{pic:albetplane}}\text{\rm)} are distinguished from other points $(\alpha,\beta)$ by the requirement that system \eqref{3rootN} has more than one maximal solution with commuting residues. For seven of these points, there are three different maximal solutions of types  {\bf1}-{\bf3}, and for the rest \text{\rm(}marked in Figure \text{\rm \ref{pic:albetplane}} with the dots surrounded by an orange rim\text{\rm)}, there are two. As will be shown below, they have one more maximal solution with non-commuting residues.
\end{remark}

\subsubsection{Maximal solutions with non-commuting \texorpdfstring{$p$}{p} and \texorpdfstring{$q$}{q}}\label{sec213}

If $(\alpha, \beta)\in \Sigma_0$ then there exist residues more general than \eqref{eq:resJNF} (see Theorem \ref{theo2}). Consider the case $\alpha=0$, \, $\beta=-3$ as an example. One can check that the eigenvalues for the operator   \eqref{eq:Loperator} are $\lambda=-2,\dots, 4$. The dimensions $d_{\lambda}$ of the corresponding eigenspaces are given by:
\begin{gather*}
    \begin{aligned}
    d_{-2}
    &= m^2 + m ( k_1 + 2 k_2 + k_3 ) + k_1^2 + k_2^2 + k_3^2 + k_1 k_2 + k_2 k_3,
    \end{aligned}
    \\[2mm]
    \begin{aligned}
    d_{-1}
    &= 3 m^2 + m (4 k_1 + 3 k_2 + 4 k_3 + 3 k_4)
    + 2 k_1 k_2 + 2 k_1 k_3 + 2 k_2 k_3 + 2 k_1 k_4 + 3 k_2 k_4 + 2 k_3 k_4,
    \end{aligned}
    \\[2mm]
    \begin{aligned}
    d_{0}
    &= 2 m^2 + m (k_1 + k_3 + 4 k_4) + 2 k_4^2 + k_1 k_4 + k_3 k_4,
    &&&&&&&&&
    d_{1}
    &= 2 k_1 k_3,
    \end{aligned}
    \\[2mm]
    \begin{aligned}
    d_2
    &= m^2 + m (k_1 + 2 k_2 + k_3)
    + k_1^2 + k_2^2 + k_3^2 + k_1 k_4 + k_3 k_4,
    \end{aligned}
    \\[2mm]
    \begin{aligned}
    d_3
    &= m^2 + m (k_2 + k_4) + k_2 k_4,
    &&&&&&&&&
    d_4 
    &= m (k_1 + k_3) + k_1 k_2 + k_2 k_3.
    \end{aligned}
\end{gather*}
The dimension of the stabilizer of the pair of residues $(p,q)$ is equal to
\begin{equation*}
    d 
    = m^2 + m (k_2 + k_4) + k_1^2 + k_2^2 + k_3^2 + k_4^2.
\end{equation*}
Since $\dim \mathcal{O} = n^2-d,$  the possible number of arbitrary constants $M$ in the formal solution  \eqref{y} is given by 
\begin{equation}\label{0,-3}
    M
    = 2 n^2 - d_{-2} - d_{-1} + \dim \mathcal{O}
    = 2 n^2 - m^2 - m (k_1 + 2 k_2 + k_3) - k_1^2 - k_2^2 - k_3^2 - k_2 (k_1 + k_3 + k_4).
\end{equation}

A similar calculation shows that for  $\alpha = 0$, $\beta = - 2$  the number $M$ of parameters is defined by the formula
\begin{equation}\label{0,-2}
    M = 2 n^2 - 2 m^2 - 2 m (k_2 + k_3) - k_1^2 - k_2^2 - k_3^2 - k_2 k_3.
\end{equation}
For the remaining points of $\Sigma_0$ the number $M$ can be found by means of the discrete symmetries \eqref{trr1} -- \eqref{trr3}.

The maximal solutions for which $m=0,$ are described in the previous section (see Section~\ref{sec211}). Let $m>0$.
In the case $\alpha=0$, \, $\beta=-3$ from formula \eqref{0,-3} it follows that $M$ may be equal $2 n^2-1$ only if $m=1$, $k_1=k_2=k_3=0$, $k_4=n-2$.

To prove that for $m = 1$ the formal solution \eqref{y} with the residues described in Theorem~\ref{theo2} is indeed maximal in the case $\alpha=0$, \, $\beta=-3$,
we represent the matrix coefficients of the formal series in the block form
\begin{gather*}
    x_k
    =
    \begin{blockarray}{cccc}
      \,\,\, 1 & 1 & n - 2 &  \\
    \begin{block}{(ccc)c}
      x_{k, 11} & x_{k, 12} & x_{k, 13} & 1  \\
      x_{k, 21} & x_{k, 22} & x_{k, 23} & 1  \\
      x_{k, 31} & x_{k, 32} & x_{k, 33} & n - 2  \\
    \end{block}
    \end{blockarray}
    , \qquad
    y_k
    =
    \begin{blockarray}{cccc}
      \,\,\, 1 & 1 & n - 2 &  \\
    \begin{block}{(ccc)c}
      y_{k, 11} & y_{k, 12} & y_{k, 13} & 1  \\
      y_{k, 21} & y_{k, 22} & y_{k, 23} & 1  \\
      y_{k, 31} & y_{k, 32} & y_{k, 33} & n - 2  \\
    \end{block}
    \end{blockarray}.
\end{gather*}
The resonances in these cases are $k = 0, 1, 2, 3$. Considering the recursion relations \eqref{eq:coeffconditions} for these $k$, we find that the resonance blocks are $x_{0, 11}$, $x_{0, 13}$, $x_{0, 31}$, $x_{0, 33}$, $y_{0, 11}$, $y_{0, 13}$, $y_{0, 31}$, $y_{0, 33}$, $x_{2, 22}$, $x_{3, 21}$, $x_{3, 23}$. 
A straightforward computation shows that system \eqref{eq:matrixformofconditions} is compatible for the resonance values of $k$ and the blocks listed above are arbitrary.
The sum of dimensions of the resonance blocks is equal to $2 n^2 - 3 n + 2$.  The above formula for the dimension of the orbit gives $\dim~\mathcal{O}~=~3 n-3$ and we get the maximal number $2 n^2-1$ of parameters in the series \eqref{y}. 
	
From the formula \eqref{0,-2}, it follows that in the case  $\alpha=0$, \, $\beta=-2$, there are no maximal solutions with $m>0$.

Putting together the results of Sections \ref{sec211} and \ref{sec213}, we arrive at the following statement:

\begin{theorem}\label{theo4}
System \eqref{3rootN} has three different maximal solutions of the form \eqref{y} iff $(\alpha,\beta) \in \Sigma$. For points in Figure \text{\rm{\ref{pic:albetplane}}} that have an orange rim, two of these solutions have commuting residues, and for the third the residues do not commute. For the remaining seven points, the residues commute with each other for all three maximal solutions.
\end{theorem}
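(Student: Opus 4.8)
The statement is essentially a synthesis of Propositions~\ref{prop1} and~\ref{thm:albetcond_maxsol} with the parameter counts \eqref{0,-3}, \eqref{0,-2} of Section~\ref{sec213}, so the plan is to organise it as a bookkeeping argument that separates the maximal solutions \eqref{y} with commuting residues ($[p,q]=0$) from those with $[p,q]\ne 0$. Write $N_c$ and $N_{nc}$ for the numbers of such solutions, up to conjugation, for a fixed $(\alpha,\beta)$. By Proposition~\ref{thm:albetcond_maxsol} and the Remark following it, $N_c$ equals the number of the three conditions listed there (on $\alpha$, $\beta$, $\alpha+\beta$ being integers in prescribed ranges) that are satisfied; hence $N_c\le 3$, and $N_c\ge 2$ precisely when $(\alpha,\beta)\in\Sigma$. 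By Proposition~\ref{prop1}, $N_{nc}=0$ unless $(\alpha,\beta)\in\Sigma_0$. So the whole statement reduces to determining $N_{nc}$ on $\Sigma_0$ and then assembling.

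The converse implication is then immediate: if $(\alpha,\beta)\notin\Sigma$ then $N_c\le 1$, while $N_{nc}=0$ because $\Sigma_0\subset\Sigma$, so the system has at most one maximal solution, in particular not three. For the direct implication together with the refinement I would compute $N_{nc}$ on $\Sigma_0$ from the two model cases of Section~\ref{sec213}. The involutions \eqref{trr1}--\eqref{trr3} carry maximal solutions to maximal solutions and preserve non-commutativity of the residues, so $N_{nc}$ is constant along $D_{12}$-orbits. For $\alpha=0,\ \beta=-2$, formula \eqref{0,-2} gives $M<2n^2-1$ for every $m\ge 1$, hence $N_{nc}=0$; as $(0,-2)$ lies in the orbit of the six non-orange points of $\Sigma_0$, we get $N_{nc}=0$ at all six. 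For $\alpha=0,\ \beta=-3$, formula \eqref{0,-3} forces $M=2n^2-1$ only when $m=1$, $k_1=k_2=k_3=0$, $k_4=n-2$; by Theorem~\ref{theo2} this configuration determines the residues uniquely up to conjugation and Section~\ref{sec213} checks that the associated series is genuinely maximal, so $N_{nc}=1$, and transporting along the orbit of the six orange-rim points gives $N_{nc}=1$ there as well.

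Assembling: at the six orange-rim points exactly two of the conditions of Proposition~\ref{thm:albetcond_maxsol} hold (a finite check), so $N_c=2$ and $N_{nc}=1$, i.e. three maximal solutions, two with commuting residues and one without; at the remaining seven points of $\Sigma$---namely $(-1,-1)$ together with the six points having $\alpha,\beta\in\{0,-1,-2\}$ and $\alpha+\beta\in\{-1,-2,-3\}$---all three conditions hold, so $N_c=3$, while $N_{nc}=0$ because $(-1,-1)\notin\Sigma_0$ and the other six form the $(0,-2)$ orbit; thus three maximal solutions, all with commuting residues. This is exactly the asserted dichotomy, and combined with the converse it yields the iff.

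The substantive difficulty is entirely inside the two imported computations of Section~\ref{sec213}, not in the assembly: one must know that the list of non-negative integer eigenvalues of the operator~\eqref{eq:Loperator} on the general residues~\eqref{anzat} is complete, that system~\eqref{eq:matrixformofconditions} is compatible at every resonance (with no obstruction hidden in the right-hand sides) so that the formulas for $M$ are exact, and that the group generated by~\eqref{trr1}--\eqref{trr3} genuinely acts transitively on each of the two six-point orbits inside $\Sigma_0$, so that the single cases $(0,-2)$ and $(0,-3)$ really do suffice. Granting these ingredients, the theorem is the short combinatorial count above.
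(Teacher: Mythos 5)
Your proposal is correct and follows essentially the same route as the paper, which states Theorem \ref{theo4} as a direct assembly of Proposition \ref{thm:albetcond_maxsol} (commuting residues, Lemma \ref{lem:max} restricting to types \textbf{1}--\textbf{3}), Proposition \ref{prop1}, and the parameter counts \eqref{0,-3}, \eqref{0,-2} of Section \ref{sec213} transported along the $D_{12}$-orbits. Your write-up merely makes explicit the bookkeeping ($N_c$ versus $N_{nc}$) that the paper leaves implicit in the phrase ``putting together the results of Sections \ref{sec211} and \ref{sec213}.''
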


\section{Inhomogeneous integrable systems of \texorpdfstring{$\PIV$}{P4} type}
\label{sec:P4tailedsystems}

In the scalar case, the procedure we use is as follows.
Suppose we want to find linear integrable deformations of a homogeneous system \eqref{Ptype} of the form
\begin{equation}\label{Ptypedef}
\left\{
\begin{array}{lcl}
u' &=& -u^2 + 2 u v - 2 z u + b_1 u + b_2 v + c_1, \\[2mm]
v' &=& -v^2 + 2 u v + 2 z v + b_3 v + b_4 u + c_2,
\end{array}
\right. \qquad b_i, c_i \in{\mathbb C}, 
\end{equation}
satisfying the \PPainleve--Kovalevskaya test to ``regain''\, system \eqref{syscal}. 

To reconstruct system \eqref{syscal}, it is not sufficient to require that system \eqref{Ptypedef} possesses one of the solutions of the type {\bf 1}, {\bf 2}, or {\bf 3}, containing the maximum possible number of arbitrary constants.

When restoring linear terms, the following principle works efficiently:
\begin{itemize} \item {\it All maximal solutions of a homogeneous system have to allow prolongation to solutions of the inhomogeneous system while preserving the property of their maximality. }
\end{itemize}

In particular, in the scalar example discussed above, we need to require the existence of maximal deformations of all three solutions, and then the answer will only be the system 
\begin{gather}\label{sysshift}
    \begin{cases}
    u'
    &= - u^2 
    + 2 u v - 2 z u
    + 2 \tau u
    + c_1
    ,
    \\[2mm]
    v'
    &= - v^2 + 2 u v + 2 z v
    - 2 \tau v
    + c_2
    .
    \end{cases}
\end{gather}
The parameter $\tau$ can be removed by a shift of $z$.

Below we use this procedure to find integrable linear deformations \eqref{3rootTail} of homogeneous matrix systems \eqref{3rootN}, corresponding to the pairs $(\alpha, \beta)$, shown by the red dots in Figure \ref{pic:albetplane}. For the remaining points, the answer can be obtained using transformations \eqref{trr1} -- \eqref{trr3}.

For diagonal residues $p$ and $q$ a maximal solution has to contain $2 n^2 - 2 n + 1$ arbitrary constants. For us to be able to add to this number the dimension $2 n - 2$ of the orbit ${\mathcal O}$, such a maximal solution must exist for arbitrary matrices $p$, $q \in {\mathcal O}$. Of course, we can reduce these matrices to a diagonal form by a conjugation, but all matrix coefficients $b_i$, $c_i$ will also be conjugated. Therefore, if we obtain some condition for these coefficients, it must also be satisfied when replacing $b_i \mapsto S \, b_i \, S^{-1}$, $c_i \mapsto S \, c_i \, S^{-1},$ where $S$ is an arbitrary non-degenerate matrix. In other words, the conditions have to be $GL_n$--invariant.
 
\begin{remark} \label{invariance}
If we have discovered that for some matrix $A$ the condition $a_{i,j} = 0$ is satisfied for $i \ne j$, then $GL_n$--invariance implies $A = \gamma\, \mathbb{I}$, $\gamma \in \mathbb{C}$.
\end{remark}

\subsection{Linear deformation of system \eqref{3rootN} with \texorpdfstring{$\alpha = \beta =-1$}{alpha = beta = -1}}

Recall that the homogeneous system \eqref{3rootN} in this case possesses maximal solutions of different three types (see Theorem~\ref{theo4}). The normal Jordan forms of their residues $p$ and $q$ are given~by 
\begin{align}
    \textbf{1} 
    &: 
    &
    p
    &= \diag \brackets{-1, \, 0_{n - 1}},
    &
    q
    &= \diag \brackets{-1, \, 0_{n - 1}};
    \\[1mm]
    \textbf{2} 
    &: 
    &
    p
    &= \diag \brackets{\phantom{-}1, \, 0_{n - 1}},
    &
    q
    &= \diag \brackets{\phantom{-}0, \, 0_{n - 1}};
    \\[1mm]
    \textbf{3} 
    &: 
    &
    p
    &= \diag \brackets{\phantom{-}0, \, 0_{n - 1}},
    &
    q
    &= \diag \brackets{\phantom{-}1, \, 0_{n - 1}}
\end{align}
and the coefficients $x_k$ and $y_k$ of the formal solutions \eqref{y} contain $2 n^2 - 2 n + 1$ arbitrary constants. We represent these coefficients as block $2 \times 2$-matrices:
\begin{align}
    x_{k}
    &= 
    \begin{blockarray}{ccc}
      1 & n - 1 &  \\
    \begin{block}{(cc)c}
      x_{k, 11} & x_{k, 12} & 1  \\
      x_{k, 21} & x_{k, 22} & n - 1  \\
    \end{block}
    \end{blockarray},
    &
    y_{k}
    &= 
    \begin{blockarray}{ccc}
      1 & n - 1 &  \\
    \begin{block}{(cc)c}
      y_{k, 11} & y_{k, 12} & 1  \\
      y_{k, 21} & y_{k, 22} & n - 1  \\
    \end{block}
    \end{blockarray}.
\end{align}
In all three cases the operator ${\mathcal L} - k \, \mathbb{I}$ is degenerate for $k = 0,1,2$. It follows from \eqref{eq:matrixformofconditions} that the eigenspaces for these resonance eigenvalues of the operator $\mathcal L$ consist of the blocks  $x_{0, 22}$, $y_{0, 22}$, $x_{1, 12}$, $x_{1, 21}$, $x_{2,11}$.

For the corresponding  inhomogeneous system \eqref{3rootTail} the same blocks must remain resonant. This means that system \eqref{eq:matrixformofconditions} has to be compatible for $k = 0,1,2$, which leads to conditions on the coefficients 
\begin{align}
    b_i
    &= 
    \begin{blockarray}{ccc}
      1 & n - 1 &  \\
    \begin{block}{(cc)c}
      b_{i, 11} & b_{i, 12} & 1  \\
      b_{i, 21} & b_{i, 22} & n - 1  \\
    \end{block}
    \end{blockarray},  \qquad 
    &
    c_i
    &= 
    \begin{blockarray}{ccc}
      1 & n - 1 &  \\
    \begin{block}{(cc)c}
      c_{i, 11} & c_{i, 12} & 1  \\
      c_{i, 21} & c_{i, 22} & n - 1  \\
    \end{block}
    \end{blockarray},
    &
    i 
    &= 1, \dots, 5,
\end{align} 
of the system.
We call these compatibility conditions {\it resonance conditions} at the eigenvalue~$k$.

In the relation \eqref{eq:matrixformofconditions} the operator $\mathcal L$ in the left hand sides is the same as in the homogeneous case while the right hand sides  $\begin{pmatrix} f_1 \brackets{x_k, y_k}, & f_2 \brackets{x_k, y_k} \end{pmatrix}^T$ are given by 
\begin{equation}
    \label{eq:rhs_tails}
    \begin{gathered}
    \begin{aligned}
    f_1 \brackets{x_k, y_k}
    &= f_{\alpha} \brackets{x_{k - 1}, y_{k - 1}}
    + 2 z_0 \, x_{k - 1}
    + 2 x_{k - 2} \, \brackets{1 - \delta_{0,k}}
    \\
    & \qquad
    - b_1 \, x_{k - 1}
    - x_{k - 1} \, b_2
    - b_3 \, y_{k - 1}
    - y_{k - 1} \, b_4
    - b_5 \, \delta_{1,k},
    \end{aligned}
    \\[2mm]
    \begin{aligned}
    f_2 \brackets{x_k, y_k}
    &= f_{\beta} \brackets{y_{k - 1}, x_{k - 1}}
    - 2 z_0 \, y_{k - 1}
    - 2 y_{k - 2} \, \brackets{1 - \delta_{0,k}}
    \\
    & \qquad
    - c_1 \, y_{k - 1}
    - y_{k - 1} \, c_2
    - c_3 \, x_{k - 1}
    - x_{k - 1} \, c_4
    - c_5 \, \delta_{1,k},
    \end{aligned}
    \end{gathered}
\end{equation}
where $\delta$ is the Kronecker delta, $x_{-1} = p$, $y_{-1} = q$, and the function $f_{\gamma} \brackets{x_k, y_k}$ is defined by \eqref{eq:rhsFgamma}. Notice that the functions $f_i$ in \eqref{eq:rhs_tails} depend not only on the previous coefficients of the series \eqref{y}, but also on the matrix parameters $b_i, c_i$ of deformation \eqref{3rootTail}. 

We are going to find the resonance conditions for $k=0,1,2$. If they are satisfied then the coefficients $x_k$, \, $y_k,$ where $k \ge 3$ are uniquely defined since the operator ${\mathcal L}- k \, \mathbb{I}$ in \eqref{eq:matrixformofconditions} is invertible. 

\medskip
\textbf{\textbullet \, Resonance conditions at $k = 0$}

When defining the matrices $x_0$, $y_0$ for series of all three types, the resonance conditions do not arise, the blocks $x_{0, 22}$, $y_{0, 22}$ turn out to be arbitrary, and the remaining blocks in  $x_0$, $y_0$ are uniquely defined.
 
\medskip
\textbf{\textbullet \, Resonance conditions at $k = 1$}

For the series of type \textbf{2}, two resonance conditions arise:
\begin{gather}
\begin{multlined}
    2 z_0 \, c_{4, 12}
    - \brackets{
    c_{3, 12} - c_{4, 12}
    } x_{0, 22}
    - \brackets{
    b_{2, 12} + c_{1, 12}
    } y_{0, 22}
    \\
    + \frac{1}{2} \left(
    - c_{3, 11} b_{2, 12} 
    + c_{3, 11} c_{2, 12} 
    + c_{4, 11} b_{2, 12} 
    + c_{4, 11} c_{2, 12}
    + 2 c_{1, 11} c_{4, 12}
    + 2 c_{4, 12} c_{2, 22} 
    \right.
    \\
    \left.
    \hspace{5cm}
    + 2 c_{3, 11} c_{4, 12}
    - 2 b_{2, 12} c_{4, 22}
    + 2 c_{4, 12} c_{4, 22}
    - 2 c_{5, 12}
    \right)
    = 0,
\end{multlined}
\\[2mm]
\begin{multlined}
    2 z_0 \, c_{3, 21}
    + x_{0, 22} \brackets{
    c_{3, 21} - c_{4, 21}
    } 
    - y_{0, 22} \brackets{
    b_{1, 21} + c_{2, 21}
    }
    \\
    + \frac{1}{2} \left(
    b_{1, 21} c_{3, 11}
    + c_{1, 21} c_{3, 11}
    + 2 c_{1, 22} c_{3, 21}
    + 2 c_{3, 21} c_{2, 11} 
    - 2 c_{3, 22} b_{1, 21} 
    + 2 c_{3, 22} c_{3, 21} 
    \right.
    \\
    \left.
    \hspace{5cm}
    - b_{1, 21} c_{4, 11}
    + c_{1, 21} c_{4, 11}
    + 2 c_{3, 21} c_{4, 11}
    - 2 c_{5, 21}
    \right)
    = 0.
\end{multlined}
\end{gather}

Due to the arbitrariness of the point $z_0$ and the blocks $x_{0, 22}$, $y_{0, 22}$, these relations are equivalent to a system of eight equations. From the six simplest equations and the $GL_n$--invariance of the conditions (see Remark \ref{invariance}), it follows that
\begin{align} \label{eq:cond2_1}
    c_1
    &= - b_2 + \gamma_1 \, \mathbb{I},
    &
    c_2
    &= - b_1 + \gamma_2 \, \mathbb{I},
    &
    c_3
    &= \gamma_3 \, \mathbb{I},
    &
    c_4
    &= \brackets{\gamma_3 + \gamma_4} \, \mathbb{I},
    &
    \gamma_i
    &\in {\mathbb C}.
\end{align}
Taking into account relations \eqref{eq:cond2_1}, we find that the remaining two equations are equivalent to the relation
\begin{align} \label{eq:cond2_2}
    &&
    c_5
    &= - \brackets{\gamma_3 + \frac{1}{2} \gamma_4} \brackets{b_1 + b_2} 
    + \gamma_5 \, \mathbb{I},
    &
    \gamma_5
    &\in {\mathbb C}.
    &&
\end{align}
If the matrices $b_i$, $c_i$ satisfy the conditions \eqref{eq:cond2_1} -- \eqref{eq:cond2_2}, then the coefficients $x_1$, $y_1$ in the formal solution of  type \textbf{2} has two arbitrary blocks $x_{1, 12}$ and $x_{1, 21}$.

For the solution of type \textbf{3}, two resonance conditions also appear:
\begin{gather} \label{eq:cond334}
\begin{multlined}
    - 2 z_0 \, b_{4, 12}
    - \brackets{
    b_{1, 12} + c_{2, 12}
    } x_{0, 22}
    - \brackets{
    b_{3, 12} - b_{2, 12}
    } y_{0, 22}
    \\
    + \frac{1}{2} \left(
    b_{3, 11} b_{2, 12}
    + b_{4, 11} b_{2, 12}
    + 2 b_{1, 11} b_{4, 12}
    + 2 b_{4, 12} b_{2, 22}
    + 2 b_{3, 11} b_{4, 12}
    + 2 b_{4, 12} b_{4, 22}
    \right.
    \\
    \left.
    \hspace{5cm}
    - b_{3, 11} c_{2, 12}
    + b_{4, 11} c_{2, 12}
    - 2 c_{2, 12} b_{4, 22}
    - 2 b_{5, 12}
    \right)
    = 0,
\end{multlined}
\\[2mm]
\label{eq:cond343}
\begin{multlined}
    - 2 z_0 \, b_{3, 21}
    - x_{0, 22} \brackets{
    b_{2, 21} + c_{1, 21}
    }
    + y_{0, 22} \brackets{
    b_{3, 21} - b_{4, 21}
    }
    \\
    + \frac{1}{2} \left(
    b_{1, 21} b_{3, 11}
    + 2 b_{1, 22} b_{3, 21}
    + 2 b_{3, 21} b_{2, 11}
    + 2 b_{3, 22} b_{3, 21}
    + b_{1, 21} b_{4, 11}
    + 2 b_{3, 21} b_{4, 11}
    \right.
    \\
    \left.
    \hspace{5cm}
    + c_{1, 21} b_{3, 11}
    - 2 b_{3, 22} c_{1, 21}
    - c_{1, 21} b_{4, 11}
    - 2 b_{5, 21}
    \right)
    = 0.
\end{multlined}
\end{gather}
Reasoning as above, we derive 8 algebraic equations from the conditions \eqref{eq:cond334} -- \eqref{eq:cond343}. The $GL_n$--invariant solution of the simplest six of them has the form
\begin{align} \label{eq:cond3_1}
    c_1
    &= - b_2 + \delta_1 \, \mathbb{I},
    &
    c_2
    &= - b_1 + \delta_2 \, \mathbb{I},
    &
    b_3
    &= \delta_3 \, \mathbb{I},
    &
    b_4
    &= \brackets{\delta_3 + \delta_4} \, \mathbb{I}, 
    &
    \delta_i
    &\in {\mathbb C},
\end{align}
and the two remaining equations lead to  
\begin{align} \label{eq:cond3_2}
    &&
    b_5
    &= \brackets{\delta_3 + \frac{1}{2} \delta_4} \brackets{b_1 + b_2} 
    + \delta_5 \, \mathbb{I}, 
    &
    \delta_5 
    &\in {\mathbb C}.
    &&
\end{align}
If the conditions \eqref{eq:cond3_1} -- \eqref{eq:cond3_2} for the matrices $b_i$, $c_i$ hold, then the series of type \textbf{3} has two arbitrary blocks $x_{1, 12}$ and $x_{1, 21}$.

Taking together, the conditions \eqref{eq:cond2_1} -- \eqref{eq:cond2_2} and \eqref{eq:cond3_1} -- \eqref{eq:cond3_2} are equivalent to the relations
\begin{gather*}
    \begin{aligned}
    c_1
    &= - b_2 + \gamma_1 \, \mathbb{I},
    &
    c_2
    &= - b_1 + \gamma_2 \, \mathbb{I},
    &
    c_3
    &= \gamma_3 \, \mathbb{I},
    &
    c_4
    &= \brackets{\gamma_3 + \gamma_4} \, \mathbb{I},
    \end{aligned}
    \\[1mm]
    \begin{aligned}
    c_5
    &= - \brackets{\gamma_3 + \frac{1}{2} \gamma_4} \brackets{b_1 + b_2} 
    + \gamma_5 \, \mathbb{I},
    \end{aligned}
    \\[1mm]
    \begin{aligned}
    b_3
    &= \delta_3 \, \mathbb{I},
    &
    b_4
    &= \brackets{\delta_3 + \delta_4} \, \mathbb{I},
    &
    b_5
    &= \brackets{\delta_3 + \frac{1}{2} \delta_4} \brackets{b_1 + b_2} 
    + \delta_5 \, \mathbb{I}.
    \end{aligned}
\end{gather*}
Substituting them into the resonance conditions at $k=1$ for the solution of type \textbf{1}, we obtain that it has two arbitrary blocks $x_{1, 12}$ and $x_{1, 21}$ iff
\begin{align} \label{eq:cond1_1}
    &&
    \LieBrackets{b_1, b_2}
    &= \frac{1}{2} \brackets{\gamma_1 + \gamma_2} \brackets{b_1 + b_2} 
    + \varepsilon \, \mathbb{I}, 
    &
    \varepsilon 
    &\in {\mathbb C}.
    &&
\end{align}

\medskip
\textbf{\textbullet \, Resonance conditions at $k = 2$}

Each of the series of type \textbf{2} and \textbf{3} has one resonance condition. They are given by
\begin{multline*}
    (\gamma_1 + \gamma_2) (2 \gamma_3 + \gamma_4) z_0
    + \frac{1}{4} (8 \gamma_3 
    - 4 \gamma_1^2 \gamma_3 
    - 8 \gamma_1 \gamma_2 \gamma_3 
    - 4 \gamma_2^2 \gamma_3 
    - 4 \gamma_1 \gamma_3^2 
    \\
    - 4 \gamma_2 \gamma_3^2 
    + 4 \gamma_4 
    - 2 \gamma_1^2 \gamma_4 
    - 4 \gamma_1 \gamma_2 \gamma_4 
    - 2 \gamma_2^2 \gamma_4 
    - 4 \gamma_1 \gamma_3 \gamma_4 
    - 4 \gamma_2 \gamma_3 \gamma_4 
    \\
    - \gamma_1 \gamma_4^2 
    - \gamma_2 \gamma_4^2 
    + 4 \gamma_1 \gamma_5 
    + 4 \gamma_2 \gamma_5)
    = 0,
\end{multline*}
and 
\begin{multline*}
    (\gamma_1 + \gamma_2) (2 \delta_3 + \delta_4) z_0
    + \frac{1}{4} (-8 \delta_3 
    - 4 \delta_4 
    - 4 \delta_3^2 \gamma_1 
    - 4 \delta_3 \delta_4 \gamma_1 
    - \delta_4^2 \gamma_1 
    \\
    + 4 \delta_5 \gamma_1 
    - 4 \delta_3^2 \gamma_2 
    - 4 \delta_3 \delta_4 \gamma_2 
    - \delta_4^2 \gamma_2 
    + 4 \delta_5 \gamma_2)
    = 0,
\end{multline*}
respectively. The corresponding system of four equations has two solutions:
\begin{align} \label{eq:cond231}
    \gamma_2
    &= - \gamma_1,
    &
    \gamma_4
    &= - 2 \gamma_3,
    &
    \delta_4
    &= - 2 \delta_3;
\end{align}
and
\begin{align} \label{eq:cond232}
    \gamma_5
    &= \delta_5 = 0,
    &
    \gamma_4
    &= - 2 \gamma_3,
    &
    \delta_4
    &= - 2 \delta_3.
\end{align}
If relations \eqref{eq:cond231} are satisfied, then the series of type \textbf{1}, \textbf{2}, \textbf{3} have an arbitrary block $x_{2, 11}$, i.e. they are maximal. In system \eqref{eq:cond1_1}, one have to put $\varepsilon = 0$, since the commutator of two matrices is a traceless matrix.

The system corresponding to solution \eqref{eq:cond231} has the form 
\begin{align} \label{eq:case1tail}
    &
    \left\{
    \begin{aligned}
    u'
    &= - u^2 + u v + v u - 2 z u + b_1 u + u b_2 + \gamma_1 \, \mathbb{I},
    \\
    v'
    &= - v^2 + v u + u v + 2 z v - b_2 v - v b_1 + \gamma_2 \, \mathbb{I},
    \end{aligned}
    \right.
    &
    \LieBrackets{b_1, b_2}
    &= 0,
    &
    \gamma_1, \gamma_2 
    &\in \mathbb{C}.
\end{align}

In the case of solution \eqref{eq:cond232}, the requirement for the existence of a maximal solution of type {\bf 1} leads to the relation
$$ (\gamma_1 + \gamma_2)^2 z_0 + \frac{1}{4} (\gamma_1 + \gamma_2) (-12 + \gamma_1^2 + 2 \gamma_1 \gamma_2 + \gamma_2^2) = 0.$$
This implies that $\gamma_1 = - \gamma_2$ and we arrive at a  particular case of the system \eqref{eq:case1tail}.

\begin{remark}
This system can be rewritten in the spirit of the paper
\text{\rm\cite{Retakh_Rubtsov_2010}} using the noncommutative independent variable $\bar z$:
\begin{align*}  
    &
    \left\{
    \begin{array}{lcl}
    u'
    &=& - u^2 + u v + v u + (k - 2) \, \bar zu - k\, u \bar z + \gamma_1,
    \\[2mm]
    v'
    &=& - v^2 + v u + u v + k\, \bar z v - (k - 2)\, v \bar z + \gamma_2.
    \end{array}
    \right.
\end{align*}
\end{remark}

Applying the following transformation of the form \eqref{tranconj}
\begin{align*}
    u 
    & \mapsto e^{z b_2} u e^{- z b_2},
    &
    v 
    & \mapsto e^{z b_2} v e^{- z b_2},
\end{align*}
and renaming $h = b_1 + b_2$, we reduce system \eqref{eq:case1tail} to the canonical form \ref{eq:case1tail_can}.

\medskip
\subsection{Case \texorpdfstring{$\alpha = 0,\, \beta =-3$}{alpha = 0, beta = -3}} \label{sec:case5_tail}

In Sections \ref{sec:case5_tail} and \ref{sec:case3_tail}  we  use the following shifts
\begin{align} \label{eq:tailtransform}
    &
    \begin{gathered}
    \begin{aligned}
        b_1 
        &\mapsto b_1 - 2 \gamma_1 \, \mathbb{I},
        &&&
        b_2
        &\mapsto b_2 - 2 \gamma_2 \, \mathbb{I},
        &&&
        c_1
        &\mapsto c_1 + 2 \gamma_1 \, \mathbb{I},
    \end{aligned}
    \\[2mm]
    \begin{aligned}
        c_2
        &\mapsto c_2 + 2 \gamma_2 \, \mathbb{I}, 
        &&&
        z
        &\mapsto z - \gamma_1 - \gamma_2,
    \end{aligned}
    \end{gathered} 
    &
    \gamma_i 
    &\in {\mathbb C},
\end{align}
under which the system \eqref{3rootN} is invariant, to simplify the matrix coefficients.

In contrast to the previous case, the homogeneous system \eqref{3rootN} has only two maximal formal solutions of  types \textbf{1} and \textbf{3} with the resonance blocks $x_{0, 12}$, $x_{0, 22}$, $y_{0, 22}$, $x_{2, 11}$, $y_{2, 21}$. We require that the same holds true for the deformation \eqref{3rootTail}.

Calculations similar to those described in the previous section lead to the system
\begin{gather} 
    \left\{
    \begin{array}{lcl}
    u' 
    &=& - u^2 + 2 u v - 2 z u - \LieBrackets{u, h_1} - 3 h_2 u + h_2 v + h_4,
    \\[2mm]
    v'
    &=& - v^2 + 3 u v - v u + 2 z v - \LieBrackets{v, h_1} + 3 v h_2 + h_3 u - 3 u h_2 + \brackets{2 h_4 + \frac{1}{2} h_2 h_3 + \gamma \, \mathbb{I}},
    \end{array}
    \right.
    \\
    \label{eq:case5tail}
    \\[-3mm]
    \begin{aligned}
    \LieBrackets{h_1, h_2}
    &= \LieBrackets{h_1, h_3}
    = \LieBrackets{h_2, h_3}
    = 0,
    \end{aligned}
    \\[1mm]
    \begin{aligned}
    \LieBrackets{h_4, h_2 - h_3}
    &= - 2 \brackets{h_2 - h_3},
    &
    \LieBrackets{h_4, 2 h_1 - 5 h_2}
    &= - 2 h_2,
    &
    \gamma \in \mathbb{C}.
    \end{aligned}
\end{gather}

System \eqref{eq:case5tail} can be simplified by a proper transformation of the form \eqref{tranconj}. It follows from the commutator relations that 
\begin{align*}
    e^{- \frac12 z (2 h_1 - 5 h_2)} h_4 e^{\frac12 z (2 h_1 - 5 h_2)}
    &= h_4 - \frac12 z [2 h_1 - 5 h_2, h_4]
    = h_4 - z h_2.
\end{align*}Using this formula, one can verify that the mapping
\begin{align*}
    u 
    & \mapsto e^{- \frac12 z (2 h_1 - 5 h_2)} 
    \brackets{u + \frac12 h_2} 
    e^{\frac12 z (2 h_1 - 5 h_2)},
    &
    v 
    & \mapsto e^{- \frac12 z (2 h_1 - 5 h_2)} 
    \brackets{v - h_2} 
    e^{\frac12 z (2 h_1 - 5 h_2)},
\end{align*}
reduces system \eqref{eq:case5tail} to  \ref{eq:case5tail_can}.

If all the coefficients are scalar, the system is contained in \cite[(21)]{adler2020}.

\subsection{Case \texorpdfstring{$\alpha = 0,\, \beta =-2$}{alpha = 0, beta = -2}} \label{sec:case3_tail}

Similar to the case $\alpha = \beta = -1$, the homogeneous system \eqref{3rootN} has three maximal formal solutions. The resonant blocks are $x_{0, 22}$, $y_{0, 22}$, $x_{1, 12}$, $y_{1, 21}$, $x_{2,11}$ in a series of type~\textbf{1}, $x_{0, 22}$, $y_{0, 21}$, $y_{0, 22}$, $x_{2, 11}$, $x_{2, 12}$ in a series of type~\textbf{2}, and $x_{0, 12}$, $x_{0, 22}$, $y_{0, 22}$, $x_{2, 11}$, $x_{2, 21}$ in a series of type~\textbf{3}. We require that the same blocks have to be resonance in the formal solution \eqref{y} for the deformation \eqref{3rootTail}.

After a renaming the coefficients the result can be written as
\begin{equation}
    \begin{gathered}
    \label{eq:case3tail}
    \left\{
    \begin{array}{lcl}
    u'
    &=& - u^2 + 2 u v - 2 z u + \LieBrackets{u, h_1} + 2 u h_2 - h_2 v + \brackets{h_3 + \gamma \, \mathbb{I}},
    \\[2mm]
    v' 
    &=& - v^2 + 2 u v + 2 z v + \LieBrackets{v, h_1} - 2 h_2 v + u h_2 + h_3,
    \end{array}
    \right.
    \\[2mm]
    \begin{aligned}
    \LieBrackets{h_1, h_2}
    &= 0,
    &
    \LieBrackets{h_3, h_2 + 2 h_1}
    &= - 2 h_2,
    &
    \gamma \in \mathbb{I}.
    \end{aligned}
    \end{gathered}
\end{equation}

In the case when all the coefficients are scalar, the system is also contained in \cite[(19)]{adler2020}.

System \eqref{eq:case3tail} is equivalent to the system from Conclusion in the paper \cite{Adler_Sokolov_2020_1}, which corresponds to the case $\alpha = -2$, $\beta = -1$. The equivalence is established by the composition of transformations \eqref{trr2}, \eqref{trr3}, and \eqref{tranconj}, supplemented by a scaling  and shift of the variables (see the footnote on page \pageref{footnote:scal}).

This system can be reduce to the canonical form \ref{eq:case3tail_can} by the mapping
\begin{align*}
    u 
    & \mapsto e^{\frac12 z (2 h_1 + h_2)} 
    \brackets{u - \frac12 h_2} 
    e^{- \frac12 z (2 h_1 + h_2)},
    &
    v 
    & \mapsto e^{\frac12 z (2 h_1 + h_2)} 
    \brackets{v + \frac12 h_2} 
    e^{- \frac12 z (2 h_1 + h_2)}
\end{align*}
and the subsequent renaming $h = h_3 + \frac34 h_2^2$.

\begin{remark} 
\phantom{}
\vspace{-0.2cm}
\begin{itemize}
    \item[\rm{i)}]
    In order to derive system \eqref{eq:case1tail} the existence of all three maximal solutions \eqref{y} of types {\bf 1}-{\bf 3} with commuting residues  $p$ and $q$ has to be used; 
    
    \vspace{-0.2cm}
    \item[\rm{ii)}]
    System \eqref{eq:case3tail} is defined by the requirement that two of solutions of types {\bf 1}-{\bf 3} are maximal. The third solution with commuting residues turns out to be maximal automatically; 
    
    \vspace{-0.2cm}
    \item[\rm{iii)}]
    To find  system \eqref{eq:case5tail} it is sufficient to require that two solutions with commuting residues are maximal. In addition, the system has the  maximal solution  with non-commuting residues described in Section \text{\rm\ref{sec213}}.
\end{itemize}
\end{remark} 

\section{Degeneracies to Painlev\'e-2 equations}\label{degeneracies}

In this section, we are going to establish relations between matrix systems \ref{eq:case1tail_can} -- \ref{eq:case5tail_can} and matrix $\PII$ equations 
\begin{alignat}{2}
    \label{P20}
    \tag*{$\text{P}_2^0$}
    & y''= 2 y^3 + x y + b y + y b + \alpha \, \mathbb{I}, 
    &\qquad
    & \alpha \in {\mathbb C},~~
    b \in \Mat_n (\mathbb{C}),
    \\[1mm]
    \label{P21}
    \tag*{$\text{P}_2^1$}
    & y''= [y,y'] + 2 y^3 + x y + a, 
    &
    & a \in \Mat_n (\mathbb{C}),
    \\[1mm]
    \label{P22}
    \tag*{$\text{P}_2^2$}
    & y''= 2 [y,y'] + 2 y^3 + x y + b y + y b + a, 
    &
    & a, b \in \Mat_n (\mathbb{C}),~~ 
    [b,a] = 2 b,
\end{alignat}
found in \cite{Adler_Sokolov_2020_1}. All these equations have the form
\begin{align}
    y'' 
    &= \kappa [y, y']
    + 2 y^3 + x y + b_1 y + y b_2 + a,
    &&&
    \kappa
    &\in \mathbb{C}, 
    &
    a, \,\, 
    b_1, \,\,
    b_2
    &\in \Mat_n (\mathbb{C}).
\end{align}
Such a matrix equation can be rewritten as the system of two equations
\begin{align} \label{eq:P2matsys}
     &\left\{
    \begin{array}{lcl}
         f'
         &=& - f^2 + g - \frac12 x \, \mathbb{I} - c_1,  
         \\[2mm]
         g'
         &=& 2 g f + \beta [g, f] 
         + c_2 f + f c_3 + c_4,
    \end{array}
    \right.
    &&&
    \beta
    &\in \mathbb{C}, 
    &
    c_i 
    &\in \Mat_n (\mathbb{C}).
\end{align}
Here \, $y (x) = f (x)$, \,\, $\kappa = - 1 - \beta$, \,\, $b_1 = c_2 + (2 + \beta) \, c_1$, \,\, $b_2 = c_3 - \beta \, c_1$, \, and \, $a = c_4 - \frac12 \, \mathbb{I}$.

Consider first the scalar $\PIV$ system \eqref{syscal}. The following transformation:
\begin{gather}
    \label{treps}
    \begin{aligned}
    z 
    &= \frac14 \varepsilon^{-3} - \varepsilon \, x, \quad 
    &
    u (z)
    &= - \frac14 \varepsilon^{- 3} - \varepsilon^{-1} \, f (x), \quad 
    &
    v (z)
    &= - 2 \varepsilon \, g (x),
    \end{aligned}
    \\
    \notag
    \begin{aligned}
    c_1
    &= - \frac{1}{16} \varepsilon^{-6},
    &
    c_2
    &= 2 \theta,
    &
    \theta 
    &\in \mathbb{C},
    \end{aligned}
\end{gather}
brings the system to the form
\begin{gather*}
    \left\{
    \begin{array}{lcl}
    f'
    &=& 2 \varepsilon^2 \brackets{2 f g - x f} - f^2 + g - \frac12 x,
    \\[2mm]
    g'
    &=& 2 \varepsilon^{2}
    \brackets{ - g^2 + x g} + 2 f g + \theta
    .
    \end{array}
    \right.
\end{gather*}
Passing to the limit $\varepsilon \to 0,$ we obtain a system, which is equivalent to the Painlev\'e-2 equation 
\begin{equation*}
    y''
    = 2 y^3 + x y + \brackets{\theta - \frac12}
\end{equation*}
with respect to  $y (x) = f (x)$.

Let us demonstrate that all matrix Painlev\'e-2 equations found in \cite{Adler_Sokolov_2020_1} can be obtained from systems of Section \ref{sec:P4tailedsystems} by a similar limiting transition. 

The matrix system \eqref{3rootTail}, as a result of the same  as in the scalar case substitution \eqref{treps}, supplemented by the shift $b_5 \mapsto b_5 + \frac{1}{16} \varepsilon^{-6} \, \mathbb{I}$, takes the form
\begin{gather} \label{eq:sys4tosys2}
    \left\{
    \begin{array}{lcl}
    f'
    &=& 2 \varepsilon^3 \brackets{- b_3 g - g b_4}
    + 2 \varepsilon^2 \brackets{2 f g + \alpha [f, g] - x f + \frac12 b_5}
    + \varepsilon \brackets{- b_1 f - f b_2}
    \\[3mm]
    && \qquad \qquad
    + \brackets{- f^2 + g - \frac12 x \, \mathbb{I}}
    + \frac14 \varepsilon^{-1} \brackets{- b_1 - b_2},
    \\[3mm]
    g'
    &=& 2 \varepsilon^2 \brackets{- g^2 + x g}
    + \varepsilon \brackets{- c_1 g - g c_2}
    + \brackets{2 g f + \beta [g, f] + \frac12 c_5}
    \\[3mm]
    && \qquad \qquad
    + \frac12 \varepsilon^{-1} \brackets{- c_3 f - f c_4}
    + \frac18 \varepsilon^{-3} \brackets{- c_3 - c_4}.
    \end{array}
    \right.
\end{gather}
Let us consider the limits of this system in particular cases.

\subsection{Case \texorpdfstring{$\alpha = \beta = -1$}{alpha = beta = -1}}

For system \eqref{eq:case1tail} corresponding to this case, we have
\begin{gather*}
    \begin{aligned}
    b_3
    &= b_4
    = 0, 
    &
    b_5 
    &= \gamma_1 \, \mathbb{I},
    \end{aligned}
    \\[1mm]
    \begin{aligned}
    c_1
    &= - b_2,
    &
    c_2
    &= - b_1,
    &
    c_3
    &= c_4
    = 0,
    &
    c_5
    &= \gamma_2 \, \mathbb{I},
    &
    \gamma_i 
    &\in \mathbb{C},
    \end{aligned}
\end{gather*}
and two arbitrary matrices $b_1$, $b_2$ commute: $[b_1, b_2] = 0$.  Let $b_1 = b_2 = 2 \, \varepsilon \, b$, where $b~\in~\Mat_n (\mathbb{C})$. Then the limiting system \eqref{eq:sys4tosys2} has the form 
\begin{gather*}
    \left\{
    \begin{array}{lcl}
    f'
    &=& - f^2 + g - \frac12 x \, \mathbb{I} - b,
    \\[2mm]
    g'
    &=& g f + f g + \gamma \, \mathbb{I}.
    \end{array}
    \right.
\end{gather*}
The latter system is equivalent to the \ref{P20} equation with respect to $y (x) = f (x)$. 

\subsection{Case \texorpdfstring{$\alpha = 0$, $\beta = - 3$}{alpha = 0, beta = -3}}

In this case for system \eqref{eq:case5tail} we have
\begin{gather*}
    \begin{aligned}
    b_1 
    &= - 3 h_2 + h_1,
    &
    b_2
    &= - h_1,
    &
    b_3
    &= h_2,
    &
    b_4
    &= 0,
    &
    b_5
    &= h_4,
    \end{aligned}
    \\[1mm]
    \begin{aligned}
    c_1
    &= h_1,
    &
    c_2 
    &= 3 h_2 - h_1,
    &
    c_3
    &= h_3,
    &
    c_4 
    &= - 3 h_2,
    &
    c_5
    &= 2 h_4 + \frac12 h_3 h_2 + \gamma \, \mathbb{I},
    \end{aligned}
\end{gather*}
where the matrix coefficients are connected by the relations
\begin{align*}
    [h_1, h_2]
    &= [h_1, h_3]
    = [h_2, h_3]
    = 0,
    &
    [h_4, h_2 - h_3]
    &= - 2 (h_2 - h_3),
    &
    [h_4, 2 h_1 - 5 h_2]
    &= - 2 h_2.
\end{align*}
Let $a = h_4 + \frac12 \gamma \, \mathbb{I}$ and $h_1 = h_3 = 3 h_2$, $h_2 = - \frac43 \, \varepsilon \, b$, with $b \in \Mat_n (\mathbb{C})$. Then, passing to the limit $\varepsilon \to 0$ in \eqref{eq:sys4tosys2}, we obtain the system
\begin{gather*}
    \left\{
    \begin{array}{lcl}
    f'
    &=& - f^2 + g - \frac12 x \, \mathbb{I} - b,
    \\[2mm]
    g'
    &=& 3 f g - g f - 2 [f, b] + a,
    \end{array}
    \right.
    \quad 
    [a, b]
    = - 2 b,
\end{gather*}
which is equivalent to the \ref{P22} equation for $y (x) = f (x)$. 

\subsection{Case \texorpdfstring{$\alpha = 0$, $\beta = - 2$}{alpha = 0, beta = -2}}

The coefficients of the deformed system \eqref{eq:case3tail} are given by
\begin{gather*}
    \begin{aligned}
    b_1 
    &= - h_1,
    &
    b_2 
    &= h_1 + 2 h_2,
    &
    b_3 
    &= - h_2,
    &
    b_4 
    &= 0,
    &
    b_5
    &= h_3 + \gamma \, \mathbb{I},
    \end{aligned}
    \\[1mm]
    \begin{aligned}
    c_1 
    &= - h_1 - 2 h_2,
    &
    c_2 
    &= h_1,
    &
    c_3
    &= 0,
    &
    c_4
    &= h_2,
    &
    c_5 
    &= h_3,
    \end{aligned}
\end{gather*}
and satisfy the following commutation relations
\begin{align*}
    [h_1, h_2]
    &= 0,
    &
    [h_3, 2 h_1 + h_2]
    &= - 2 h_2.
\end{align*}
Let $a = \frac12 h_3$, $h_1 = 0$ and $h_2 \mapsto \varepsilon^{-4} h_2$. Then after taking the limit $\varepsilon \to 0$ in \eqref{eq:sys4tosys2} we get the system
\begin{gather} \label{eq:sys2case4}
    \left\{
    \begin{array}{lcl}
    f'
    &=& - f^2 + g - \frac12 x \, \mathbb{I},
    \\[2mm]
    g'
    &=& 2 f g + a,
    \end{array}
    \right.
\end{gather}
which is equivalent to the \ref{P21} equation with respect to $y(x) = f (x)$.  

The system \eqref{eq:sys2case4} is also equivalent to the equation 
\begin{gather} \label{eq:P34case4}
    w''
    = \frac12 \left(w' - a\right) w^{-1} \left(w' + a\right)
    + 2 w^2 - x w
\end{gather}
for $w (x) = g (x)$, that coincides with the \PPainleve-34 equation in the scalar case.

\subsubsection*{Acknowledgements}

The authors are grateful to V. Adler, A. Odesskii, V. Poberezhny and V. Roubtsov for useful discussions. The research of the second author was carried out under the State Assignment 0029-2021-0004 (Quantum field theory) of the Ministry of Science and Higher Education of the Russian Federation. The first author was partially supported by the International Laboratory of Cluster Geometry NRU HSE, RF Government grant № 075-15-2021-608.
  
\bibliographystyle{plain}
\bibliography{bib}

\begin{thebibliography}{1}

\bibitem{adler2020}
V.~E. Adler.
\newblock Painlev{\'e} type reductions for the non-{A}belian {V}olterra
  lattices.
\newblock {\em Journal of Physics A: Mathematical and Theoretical},
  54(3):\href{https://doi.org/10.1088/1751--8121/abd21f}{035204}, 2021.
\newblock \href{https://arxiv.org/abs/2010.09021}{arXiv:2010.09021}.

\bibitem{Adler_Sokolov_2020_1}
V.~E. Adler and V.~V. Sokolov.
\newblock On matrix {P}ainlev{\'e} {II} equations.
\newblock {\em Theoret. and Math. Phys.},
  207(2):\href{https://doi.org/10.4213/tmf10027}{188--201}, 2021.
\newblock \href{https://arxiv.org/abs/2012.05639}{arXiv:2012.05639}.

\bibitem{Balandin_Sokolov_1998}
S.~P. Balandin and V.~V. Sokolov.
\newblock On the {P}ainlev{\'e} test for non-{A}belian equations.
\newblock {\em Physics letters A},
  246(3-4):\href{https://www.sciencedirect.com/science/article/abs/pii/S0375960198003363?via\%3Dihub}{267--272},
  1998.

\bibitem{Retakh_Rubtsov_2010}
V.~S. Retakh and V.~V. Rubtsov.
\newblock Noncommutative {T}oda {C}hains, {H}ankel {Q}uasideterminants and
  {P}ainlev{\'e} {II} {E}quation.
\newblock {\em Journal of Physics. A, Mathematical and Theoretical},
  43(50):\href{https://iopscience.iop.org/article/10.1088/1751--8113/43/50/505204}{505204},
  2010.
\newblock \href{https://arxiv.org/abs/1007.4168}{arXiv:1007.4168}.

\bibitem{SW2}
V.~V. Sokolov and T.~Wolf.
\newblock Non-commutative generalization of integrable quadratic {ODE} systems.
\newblock {\em Letters in Mathematical Physics},
  110(3):\href{https://link.springer.com/article/10.1007\%2Fs11005--019--01229--0}{533--553},
  2020.

\bibitem{veselov1993}
A.~P. Veselov and A.~B. Shabat.
\newblock Dressing chains and the spectral theory of the {S}chr{\"o}dinger
  operator.
\newblock {\em Functional Analysis and Its Applications},
  27(2):\href{https://doi.org/10.1007/BF01085979}{81--96}, 1993.

\end{thebibliography}

\end{document}